\theoremstyle{definition}
\newtheorem{theorem}{Theorem}
\newtheorem{lemma}[theorem]{Lemma}
\newtheorem{example}[theorem]{Example}
\newtheorem{corollary}[theorem]{Corollary}
\newtheorem{definition}[theorem]{Definition}
\def\N{\mathbb{N}}
\def\L{\mathscr{L}}
\def\Lom{\L_{\epom}}
\def\M{\mathscr{M}}
\def\FV{\mathrm{FV}}
\def\LPA{\L_{\mathrm{PA}}}
\def\LEA{\L_{\mathrm{EA}}}
\def\LEA{\L_{\mathrm{EA}}}
\def\epom{\omega\cdot\omega}
\def\onset{\mathrm{On}}
\def\depth{\mathrm{depth}}
\newcommand{\ucl}[1]{\mathrm{ucl}(#1)}
\newcommand{\case}[1]{\textbf{Case #1:}}
\begin{document}

\title{Fast-collapsing theories}
\author{Samuel A.~Alexander\thanks{Email:
alexander@math.ohio-state.edu}\\
\emph{Department of Mathematics, the Ohio State University}}
\date{February 2015}
\maketitle

\begin{abstract}
Reinhardt's conjecture, a formalization of the statement
that a truthful knowing machine can know its own truthfulness
and mechanicalness, was proved by Carlson
using sophisticated structural results about the ordinals
and transfinite induction just beyond the first epsilon number.
We prove a weaker version of the conjecture, by elementary
methods and transfinite induction up to a smaller ordinal.
\end{abstract}

\section{Introduction}

This is a paper about idealized truthful mechanical knowing agents
who know facts in a quantified arithmetic-based language that also includes a
connective for their own knowledge ($K(1+1=2)$ is read ``I (the agent)
know $1+1=2$'').
It is well known (\cite{benacerraf},
\cite{carlson2000}, \cite{lucas}, \cite{penrose}, \cite{putnam}, \cite{reinhardt})
that such an agent cannot simultaneously know its own truthfulness and its own code.
Reinhardt conjectured that,
while knowing its own truthfulness, such a machine can
know it has \emph{some} code, without knowing which.
This conjecture was proved by Carlson \cite{carlson2000}.
The proof uses sophisticated structural results from \cite{carlson1999} about the ordinals,
and involves transfinite induction up to $\epsilon_0\cdot\omega$.

We will give a proof of a weaker result, but will do so in an
elementary way, inducting only as far as $\epom$.
Along the way, we will develop some machinery that is interesting in its own right.
Carlson's proof of Reinhardt's conjecture is based on stratifying knowledge
(see \cite{carlson2012} for a gentle summary).
This can be viewed as adding operators $K^\alpha$ for knowledge
after time $\alpha$ where $\alpha$ takes ordinal values.
Under certain assumptions, theories in such stratified language \emph{collapse}
at positive integer multiples of $\epsilon_0$, in the sense that if $\phi$
only contains superscripts $<\epsilon_0\cdot n$ ($n$ a positive integer)
then $K^{\epsilon_0\cdot n}\phi$ holds if and only if $K^{\epsilon_0\cdot(n+1)}\phi$ does.
In this paper, collapse occurs at positive integer multiples of $\omega$, hence the name:
\emph{Fast-collapsing theories}.

Our result is weakened in the sense that the background theory of knowledge is weakened.
The schema $K(\ucl{K(\phi\rightarrow\psi)\rightarrow K\phi\rightarrow K\psi})$ ($\mathrm{ucl}$ denotes 
universal closure)
is weakened by adding the requirement that $K$ not be nested deeper in $\phi$ than in $\psi$
(the unrestricted schema $\ucl{K(\phi\rightarrow \psi)\rightarrow K\phi\rightarrow K\psi}$
is preserved, but the knower is not required to \emph{know} it);
the schema $\ucl{K\phi\rightarrow KK\phi}$ is forfeited entirely;
and a technical axiom called Assigned Validity (made up of valid formulas with numerals
plugged in to their free variables) is added to the background theory of knowledge.

On the bright side, our result is stated in a more general way (we mention in passing
how the full unweakened result could also be so generalized, but leave those details for later work).
Casually, our main theorem has the following form:
\begin{quote}
A truthful knowing agent whose knowledge is sufficiently ``generic''
can be taught its own truthfulness and still remain truthful.
\end{quote}
Here ``generic'' is a specific technical term, but it is
inclusive enough to include knowledge that one has some code,
thus the statement addresses Reinhardt's conjecture.

In Section \ref{prelimsect} we present some preliminaries.

In Section \ref{stratifierssectn} we develop \emph{stratifiers}, maps from unstratified language to 
stratified language.
These are the key to fast collapse.  They debuted in 
\cite{alexanderdissert} and \cite{alexanderjsl}.

In Section \ref{uniformsect} we discuss \emph{uniform} stratified theories.
A key advantage of stratifiers is that
they turn unstratified theories into uniform stratified theories.

In Section \ref{genericaxiomssectn} we define some notions of genericity of an
axiom schema, and establish the genericity of some building blocks of background
theories of knowledge.

In Section \ref{mainresultsect} we state our main theorem and make closing remarks.

\section{Preliminaries}
\label{prelimsect}

\begin{definition}
\label{standarddefns}
(Standard Definitions)
Let $\LPA$ be the language $(0,S,+,\cdot)$ of Peano arithmetic and let $\L$ be an arbitrary language.
\begin{enumerate}
\item For any $e\in\N$, $W_e$ is the range of the $e$th partial computable function.
The binary predicate $\bullet\in W_\bullet$ is $\LPA$-definable so we will freely act
as if $\LPA$ actually contains this predicate symbol.
\item If an $\L$-structure $\M$ is clear from context, an \emph{assignment}
is a function taking variables into the universe of $\M$.
\item If $s$ is an assignment, $x$ is a variable, and $a\in\M$, $s(x|a)$ is the assignment
that agrees with $s$ except that $s(x|a)(x)=a$.
\item
We define $\LPA$-terms $\overline n$ ($n\in\N$), called \emph{numerals}, so that $\overline 0=0$
and
$\overline{n+1}=S(\overline n)$.
\item If $\phi$ is an $\L$-formula, $\FV(\phi)$ is the set of free variables of $\phi$.
If $\FV(\phi)=\emptyset$ then $\phi$ is a \emph{sentence}.
\item If $\phi$ is an $\L$-formula, $x$ is variable, and $u$ is an $\L$-term,
$\phi(x|u)$ is the result of substituting $u$ for all free occurrences of $x$ in $\phi$.
\item A \emph{universal closure} of an $\L$-formula $\phi$ is a sentence $\forall x_1\cdots \forall x_n\phi$.
We write $\ucl{\phi}$ to denote a universal closure of $\phi$.
\item We use the word \emph{theory} as synonym for \emph{set of sentences}.
\item If $T$ is an $\L$-theory and $\M$ is an $\L$-structure, $\M\models T$ means that $\M\models\phi$ for all $\phi\in T$.
\item If $T$ is an $\L$-theory, we say $T\models\phi$ if $\M\models\phi$ whenever $\M\models T$.
\item A \emph{valid} $\L$-formula is one that holds in every $\L$-structure.
\item For any formulas $\phi_1,\phi_2,\phi_3$, we write $\phi_1\rightarrow\phi_2\rightarrow\phi_3$ to abbreviate
$\phi_1\rightarrow(\phi_2\rightarrow\phi_3)$.
\end{enumerate}
\end{definition}

We will repeatedly use the following standard fact
without explicit mention: if $\psi$ is a universal closure of $\phi$,
then in order to prove $\M\models\psi$, it suffices to let $s$ be an arbitrary assignment and 
show that
$\M\models\phi[s]$.

For quantified semantics we work in Carlson's \emph{base logic}, defined as follows.

\begin{definition}
\label{baselogicdefn}
(The Base Logic)
A \emph{language $\L$ in the base logic} is a first-order language $\L_0$ together with a set
of symbols called \emph{operators}.
Formulas of $\L$ are defined in the usual way, with the clause that whenever $\phi$ is an $\L$-formula
and $K$ is an $\L$-operator, $K\phi$ is also an $\L$-formula (and $\FV(K\phi)=\FV(\phi)$).
Syntactic parts of Definition \ref{standarddefns} extend to the base logic in obvious ways.
Given such an $\L$, an \emph{$\L$-structure} $\M$ is a first-order $\L_0$-structure $\M_0$
together with a function that takes one $\L$-formula $\phi$, one $\L$-operator $K$, and one assignment $s$,
and outputs True or False---in which case we write $\M\models K\phi[s]$ or $\M\not\models K\phi[s]$, respectively---satisfying
the following three conditions (where $\phi$ ranges over $\L$-formulas and $K$ ranges over operators):
\begin{enumerate}
\item Whether or not $\M\models K\phi[s]$ is independent of $s(x)$ if $x\not\in\FV(\phi)$.
\item (Alphabetic Invariance) If $\psi$ is an \emph{alphabetic variant} of $\phi$, meaning that it is obtained from $\phi$ by renaming bound
variables while respecting binding of the quantifiers, then $\M\models K(\phi)[s]$ if and only if $\M\models K(\psi)[s]$.
\item (Weak Substitution)\footnote{Note that the general
substitution law, where $y$ is replaced by an arbitrary term,
is not valid in modal logic.} If the variable $y$ is substitutable for the variable $x$ in $\phi$,
then $\M\models K\phi(x|y)[s]$ if and only if $\M\models K\phi[s(x|s(y))]$.
\end{enumerate}
\end{definition}

\begin{theorem}
\label{completenesscompactness}
(Completeness and compactness)
Let $\L$ be an r.e.~language in the base logic.
\begin{enumerate}
\item The set of valid $\L$-formulas is r.e.
\item For any r.e.~$\L$-theory $T$, $\{\phi\,:\,T\models\phi\}$ is r.e.
\item There is an effective algorithm, given (a G\"odel number for) an r.e.~$\L$-theory $T$, to find
(a G\"odel number for) $\{\phi\,:\,T\models\phi\}$.
\item If $T$ is an $\L$-theory and $T\models\phi$ ($\phi$ any $\L$-formula), there
are $\tau_1,\ldots,\tau_n\in T$ such that $\left(\bigwedge_i\tau_i\right)\rightarrow\phi$ is valid.
\end{enumerate}
\end{theorem}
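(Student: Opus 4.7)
The plan is to reduce the base logic to classical first-order logic, so that standard completeness, compactness, and effectivity results transfer.

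First, I would define a translation $t$ that replaces each subformula $K\phi$ with a fresh atomic predicate. For each operator $K$ of $\L$ and each $\L$-formula $\phi$ with a fixed enumeration of $\FV(\phi)$ of length $n$, I introduce an $n$-ary predicate symbol $P_{K,\phi}$ in an extended first-order language $\L^*$ over $\L_0$. The translation $t$ commutes with the propositional connectives and the quantifiers of $\L_0$ and sends $K\phi$ to $P_{K,\phi}(x_1,\ldots,x_n)$ where $(x_1,\ldots,x_n)$ enumerates $\FV(\phi)$.

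Next I would collect into a set $\Gamma\subseteq\L^*$ the classical axioms enforcing the three conditions of Definition \ref{baselogicdefn}. Condition 1 is automatic from the arities. For condition 2, $\Gamma$ contains $\ucl{P_{K,\phi}(\vec v)\leftrightarrow P_{K,\psi}(\vec v)}$ for every pair of alphabetic variants $\phi,\psi$. For condition 3, $\Gamma$ contains, for each $K,\phi,x,y$ with $y$ substitutable for $x$ in $\phi$, the sentence $\ucl{P_{K,\phi(x|y)}(\vec w)\leftrightarrow P_{K,\phi}(\vec v)(x|y)}$, where $\vec v$ enumerates $\FV(\phi)$, $\vec w$ enumerates $\FV(\phi(x|y))$, and the substitution on the right is an ordinary first-order term substitution in the atomic formula $P_{K,\phi}(\vec v)$. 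This $\Gamma$ is recursive once $\L$ is fixed, and uniformly so in $\L$. I would then check that $\L$-structures $\M$ correspond bijectively to $\L^*$-structures satisfying $\Gamma$, via $\M\mapsto\M^*$ with $P_{K,\phi}^{\M^*}(\vec a)$ true iff $\M\models K\phi[s]$ for any $s$ with $s(x_i)=a_i$; an induction on $\L$-formulas then gives $\M\models\phi[s]\iff\M^*\models t(\phi)[s]$. Consequently, for any $\L$-theory $T$ and $\L$-formula $\phi$, $T\models\phi \iff \Gamma\cup t(T)\models t(\phi)$, and parts (1)--(4) follow from classical completeness and compactness applied in $\L^*$, together with the uniform computability of $t$ and $\Gamma$.

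The main obstacle is the bookkeeping for condition 3: one must track which free variable of $\phi$ is mapped to which free variable of $\phi(x|y)$ (noting that $y$ may already occur free in $\phi$, in which case the arity of $P_{K,\phi(x|y)}$ drops) to ensure that $\Gamma$ really does force the assignment $\M\mapsto\M^*$ to be a well-defined bijection between $\L$-structures and models of $\Gamma$. Once that combinatorial bookkeeping is carried out, the remainder of the argument is a routine appeal to classical first-order completeness/compactness and to the effectivity of the translation.
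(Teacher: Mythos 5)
Your proposal is correct and follows essentially the same route as the paper: the paper's proof consists precisely of the remark that the theorem follows ``by interpreting the base logic in first-order logic'' (deferring details to \cite{alexanderdissert}), and your translation $t$ with fresh predicates $P_{K,\phi}$ plus the axiom set $\Gamma$ enforcing Conditions 1--3 of Definition \ref{baselogicdefn} is exactly that interpretation, carried out explicitly.
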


\begin{proof}
By interpreting the base logic in first-order logic.  For details,
see \cite{alexanderdissert}.
\end{proof}

\begin{definition}
Let $\LEA$ be the language of Epistemic Arithmetic from 
\cite{shapiro1985}, so $\LEA$
extends $\LPA$ by a unary operator $K$.
An $\LEA$-structure (more generally an $\L$-structure
where $\L$ extends $\LPA$) has \emph{standard first-order part} if its first-order part has universe $\N$ and
interprets $0,S,+,\cdot$ in the intended ways.
\end{definition}

\begin{definition}
Suppose $\L$ extends $\LPA$ and $\phi$ is an $\L$-formula with $\FV(\phi)\subseteq\{x_1,\ldots,x_n\}$.
For any assignment $s$ into $\N$,
we define
\[
\phi^s\equiv \phi(x_1|\overline{s(x_1)})\cdots (x_n|\overline{s(x_n)}),
\]
the sentence obtained by replacing all free variables in $\phi$ by numerals according to $s$.
\end{definition}

\begin{definition}
\label{defnofintendedmodel}
For any $\LEA$-theory $T$,
the intended structure for $T$ is the $\LEA$-structure $\mathscr N_T$
that has standard first-order part and interprets $K$ so that for any $\LEA$-formula $\phi$ and assignment $s$,
\[
\mbox{$\mathscr N_T\models K\phi[s]$ if and only if $T\models\phi^s$.}
\]
We say $T$ is \emph{true} if $\mathscr N_T\models T$.
\end{definition}

It is easy to check that the structures $\mathscr N_T$ of Definition \ref{defnofintendedmodel}
really are $\LEA$-structures (they satisfy Conditions 1--3 of Definition \ref{baselogicdefn}).
The following lemma shows that they accurately interpret
quantified formulas in the way one would expect.

\begin{lemma}
For any $\LEA$-theory $T$, $\LEA$-formula $\phi$ and assignment $s$,
\[
\mbox{$\mathscr N_T\models\phi[s]$ if and only if $\mathscr N_T\models \phi^s$.}
\]
\end{lemma}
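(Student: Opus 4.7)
The plan is to induct on the structure of $\phi$. In the atomic case, since $\mathscr N_T$ has standard first-order part, each numeral $\overline{s(x_i)}$ is interpreted as $s(x_i)$, so the two sides agree by the standard first-order semantic clauses. The Boolean cases follow immediately from the induction hypothesis, and the claim for sentences is trivial.

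For the quantifier case $\phi=\forall x\,\psi$, note that $x\notin\FV(\phi)$, so by alphabetic invariance we may rename if necessary so that $x$ is distinct from every $x_i\in\FV(\phi)$. Unwinding the definition, $(\forall x\,\psi)^s=\forall x\,\psi(x_1|\overline{s(x_1)})\cdots(x_n|\overline{s(x_n)})$. The forward direction then amounts to showing that for every $a\in\N$, the sentence obtained by further substituting $\overline{a}$ for $x$ holds in $\mathscr N_T$ iff $\mathscr N_T\models\psi[s(x|a)]$. This is exactly the induction hypothesis at $\psi$ with the assignment $s(x|a)$, provided one knows that substitution of numerals and reassignment at $x$ commute in the required sense. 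That commutation is a standard substitution lemma for closed terms; in the base logic, Weak Substitution (applied after an alphabetic rename that swaps $x$ for a fresh variable, then back) plus the fact that a numeral's value does not depend on the assignment together justify it.

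For the operator case $\phi=K\psi$, we have $\FV(K\psi)=\FV(\psi)$ by Definition \ref{baselogicdefn}, hence $(K\psi)^s=K(\psi^s)$. By Definition \ref{defnofintendedmodel}, $\mathscr N_T\models K\psi[s]$ iff $T\models\psi^s$. Conversely, since $\psi^s$ is a sentence, $\mathscr N_T\models K(\psi^s)$ holds iff $\mathscr N_T\models K(\psi^s)[s']$ for some (equivalently any) assignment $s'$, which by Definition \ref{defnofintendedmodel} is equivalent to $T\models(\psi^s)^{s'}=\psi^s$. Both sides therefore coincide with $T\models\psi^s$.

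The main obstacle is the quantifier step, specifically justifying that substituting the closed numeral $\overline{a}$ for $x$ is interchangeable with reassigning $x$ to $a$ when an operator may lie in $\psi$. Everything else is routine bookkeeping with Definition \ref{defnofintendedmodel}; this is the only place where the three conditions on $\L$-structures from Definition \ref{baselogicdefn} are genuinely used.
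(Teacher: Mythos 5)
Your overall skeleton (induction on $\phi$, with the $K$-case discharged via Definition \ref{defnofintendedmodel} and the observation that $(K\psi)^s\equiv K(\psi^s)$ with $\psi^s$ a sentence) matches the induction the paper intends, and your $K$-case is correct. The gap is exactly where you locate the ``main obstacle'': the commutation of numeral-substitution with reassignment at $x$ is \emph{not} justified by Weak Substitution plus alphabetic invariance. Weak Substitution only licenses replacing a variable by a \emph{variable}; the paper's own footnote to Definition \ref{baselogicdefn} warns that substitution of an arbitrary term (and a numeral $\overline a$ is a non-variable term) is not valid in modal logic. Indeed the commutation you want is false for general base-logic structures satisfying Conditions 1--3 with standard first-order part: take $\M$ to interpret $K$ by ``$\M\models K\theta[s]$ iff $\theta$ contains a numeral''; this satisfies all three conditions, yet $\M\not\models K(x=x)[s]$ while $\M\models (K(x=x))^s$. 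So your closing claim that only Conditions 1--3 are ``genuinely used'' here cannot be right --- the quantifier step depends essentially on the specific $K$-clause of $\mathscr N_T$.

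The repair is standard and small: run the induction on the number of logical symbols (connectives, quantifiers, operators) rather than on the subformula relation. In the case $\phi\equiv\forall x\psi$, write $(\forall x\psi)^s\equiv\forall x\,\theta$ with $\theta\equiv\psi(x_1|\overline{s(x_1)})\cdots(x_n|\overline{s(x_n)})$ and $\FV(\theta)\subseteq\{x\}$. Apply the induction hypothesis twice: once to $\psi$ with assignment $s(x|a)$, giving $\mathscr N_T\models\psi[s(x|a)]$ iff $\mathscr N_T\models\psi^{s(x|a)}$, and once to $\theta$ (which has the \emph{same} complexity as $\psi$, hence smaller than $\phi$) with an assignment sending $x$ to $a$, giving $\mathscr N_T\models\theta[s_0(x|a)]$ iff $\mathscr N_T\models\theta(x|\overline a)$. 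Since $\theta(x|\overline a)\equiv\psi^{s(x|a)}$ (numerals are closed, so the substitutions compose without capture), the two sides meet. The commutation you wanted is then not an external substitution lemma but an instance of the statement being proved, and its $K$-content is absorbed by the $K$-case you already handle correctly.
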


\begin{proof}
Straightforward induction.
\end{proof}

Armed with these definitions, we can make more precise some things
we suggested in the introduction.
Let $T_{\text{SMT}}$ be the following $\LEA$-theory
($\phi$ and $\psi$ range over $\LEA$-formulas):
\begin{enumerate}
\item ($E_1$) $\ucl{K\phi}$ whenever $\phi$ is valid.
\item ($E_2$) $\ucl{K(\phi\rightarrow\psi)\rightarrow K\phi\rightarrow K\psi}$.
\item ($E_3$) $\ucl{K\phi\rightarrow\phi}$.
\item ($E_4$) $\ucl{K\phi\rightarrow KK\phi}$.
\item The \emph{axioms of Epistemic Arithmetic}, by which we mean the axioms of Peano Arithmetic
with the induction schema extended to $\LEA$.
\item (Mechanicalness) $\ucl{\exists e \forall x(K\phi\leftrightarrow x\in W_e)}$
provided $e\not\in\FV(\phi)$.
\item $K\phi$ whenever $\phi$ is an instance of lines 1--6 or (recursively) 7.
\end{enumerate}
Combining lines 6 and 7 yields the \emph{Strong Mechanistic Thesis},
$K(\ucl{\exists e \forall x(K\phi\leftrightarrow x\in W_e)})$.
One of the main results of \cite{carlson2000} is that $T_{\text{SMT}}$ is true, that is, $\mathscr N_{T_{\text{SMT}}}\models T_{\text{SMT}}$.
To establish $\mathscr N_{T_{\text{SMT}}}\models E_3$,
Carlson uses transfinite recursion up to $\epsilon_0\cdot \omega$, as well
as deep structural properties (from \cite{carlson1999}) about the ordinals.
That $\mathscr N_{T_{\text{SMT}}}$ satisfies lines 2, 5, 6, and 7, is trivial; that it satisfies
line 4 follows from the fact that it satisfies lines 1--2.
Line 1 would be trivial if we added the following line to $T_{\text{SMT}}$:
\begin{itemize}
\item[1b.] (Assigned Validity) $\phi^s$, whenever $\phi$ is valid and $s$ is any assignment.
\end{itemize}
Theorems from \cite{carlson2000} imply Assigned Validity is already a consequence of $T_{\text{SMT}}$,
so this addition is not necessary, however it becomes necessary if (say) line 2 is weakened.

The main result in this paper is that by weakening $E_2$, removing $E_4$, and adding Assigned Validity, we remove the
need to induct up to $\epsilon_0\cdot\omega$.  Induction up to $\omega\cdot\omega$ suffices,
and the computations from \cite{carlson1999} can also be avoided.  This is surprising because
we do not weaken $E_3$, the lone schema for which such sophisticated 
methods were used before.

\begin{definition}
\label{depthdefn}
For any $\LEA$-formula $\phi$, let $\depth(\phi)$
denote the depth to which $K$ operators are nested in $\phi$, more formally:
\begin{itemize}
\item If $\phi$ is an $\LPA$-formula then $\depth(\phi)=0$.
\item If $\phi\equiv K(\phi_0)$ then $\depth(\phi)=\depth(\phi_0)+1$.
\item If $\phi\equiv (\rho\rightarrow\sigma)$ then $\depth(\phi)=\max\{\depth(\rho),\depth(\sigma)\}$.
\item If $\phi\in \{(\neg\phi_0), (\forall x\phi_0)\}$ then $\depth(\phi)=\depth(\phi_0)$.
\end{itemize}
\end{definition}

Now let $T^w_{\text{SMT}}$ be the $\LEA$-theory containing the following schemas:
\begin{enumerate}
\item $E_1$ and $E_3$.
\item Assigned Validity: $\phi^s$ whenever $\phi$ is valid and $s$ is any assignment.
\item ($E'_2$)  $\ucl{K(\phi\rightarrow\psi)\rightarrow K\phi\rightarrow K\psi}$
provided $\depth(\phi)\leq\depth(\psi)$.
\item The axioms of Epistemic Arithmetic.
\item Mechanicalness.
\item $K\phi$ whenever $\phi$ is an instance of lines 1--5 or (recursively) 6.
\end{enumerate}
Our main result (obtained by inducting only up to $\epom$)
will imply $T^w_{\text{SMT}}$ is true.
%
%

%
%

\section{Stratifiers}
\label{stratifierssectn}

\begin{definition}
Let $\Lom$ be the language obtained from $\LPA$ by adding operators $K^\alpha$ for all $\alpha\in\epom$.
For any $\Lom$-formula $\phi$, let
\[\onset(\phi) = \{\alpha\in\epom\,:\,\mbox{$K^\alpha$ occurs in $\phi$}\}.\]
\end{definition}

An example of an $\Lom$-formula: $\forall x(K^{\omega}K^{\omega\cdot 7+2}K^{53}K^0(x=0)\rightarrow K^{\omega\cdot 7+3}(x=0))$.

\begin{definition}
\label{stratifierdefn}
(Stratifiers)
For any infinite subset $X\subseteq\epom$, the \emph{stratifier given by $X$}
is the function $\bullet^+$ that takes $\LEA$-formulas to $\Lom$-formulas in the following way.
\begin{enumerate}
\item If $\phi$ is atomic, $\phi^+\equiv\phi$.
\item If $\phi$ is $\phi_1\rightarrow\phi_2$, $\neg\phi_1$, or $\forall x\phi_1$, then
$\phi^+$ is $\phi^+_1\rightarrow\phi^+_2$, $\neg\phi^+_1$, or $\forall x\phi^+_1$, respectively.
\item If $\phi$ is $K\phi_0$, then $\phi^+\equiv K^\alpha\phi^+_0$ where
$\alpha$ is the smallest ordinal in $X\backslash\onset(\phi^+_0)$.
\end{enumerate}
By a \emph{stratifier}, we mean a stratifier given by some $X$.
By the \emph{veristratifier}, we mean the stratifier given by $X=\{\omega\cdot 1,\omega\cdot 2,\ldots\}$.
If $\bullet^+$ is a stratifier and $T$ is an $\LEA$-theory,
$T^+$ denotes $\{\phi^+\,:\,\phi\in T\}$.
\end{definition}

For example, if $\bullet^+$ is the veristratifier, then
\[
\left(K(1=0)\rightarrow KK(1=0)\right)^+ \,\equiv\, K^\omega(1=0)\rightarrow K^{\omega\cdot 2}K^\omega(1=0).
\]

\begin{lemma}
\label{assignmentplayswellwithstratifier}
Suppose $\phi$ is an $\LEA$-formula, $s$ is an assignment into $\N$,
and $\bullet^+$ is a stratifier.
If $\alpha,\beta\in\epom$ are such that $(K\phi)^+\equiv K^\alpha\phi^+$
and $(K\phi^s)^+\equiv K^\beta(\phi^s)^+$, then $\alpha=\beta$.
\end{lemma}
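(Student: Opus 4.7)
The plan is to reduce the claim to showing that $\onset(\phi^+)=\onset((\phi^s)^+)$. Indeed, by Definition \ref{stratifierdefn}, $\alpha$ is the least element of $X\setminus\onset(\phi^+)$ and $\beta$ is the least element of $X\setminus\onset((\phi^s)^+)$; so once we know those two ``forbidden sets'' coincide, we are done.

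I will prove the auxiliary claim $\onset(\phi^+)=\onset((\phi^s)^+)$ by induction on the construction of $\phi$, letting $s$ range over all assignments (so the hypothesis is uniform in $s$). The atomic case is immediate because atomic formulas contain no $K$-operators, hence contribute $\emptyset$ to $\onset$ on both sides. The cases $\phi\equiv\neg\phi_0$, $\phi\equiv\phi_1\to\phi_2$, and $\phi\equiv\forall x\,\phi_0$ are routine: substitution and stratification both commute with the propositional connectives and quantifiers (for $\forall x$, one observes that $(\forall x\,\phi_0)^s\equiv\forall x\,\phi_0^{s'}$ for the appropriate restriction $s'$ of $s$), so we get $\onset(\phi^+)=\onset(\phi_0^+)=\onset((\phi_0^{s'})^+)=\onset((\phi^s)^+)$ by applying the inductive hypothesis to $\phi_0$ with assignment $s'$, and similarly for the other two cases.

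The only case requiring care is $\phi\equiv K\phi_0$. Here I will use the fact from Definition \ref{baselogicdefn} that $\FV(K\phi_0)=\FV(\phi_0)$; unwinding the definition of $\phi\mapsto\phi^s$, this gives $(K\phi_0)^s\equiv K(\phi_0^s)$. Consequently $(K\phi_0^s)^+\equiv K^\beta(\phi_0^s)^+$ where $\beta$ is the least element of $X\setminus\onset((\phi_0^s)^+)$, while $(K\phi_0)^+\equiv K^\alpha\phi_0^+$ where $\alpha$ is the least element of $X\setminus\onset(\phi_0^+)$. Applying the inductive hypothesis to $\phi_0$ yields $\onset(\phi_0^+)=\onset((\phi_0^s)^+)$, so $\alpha=\beta$, and then
\[
\onset((K\phi_0)^+)=\{\alpha\}\cup\onset(\phi_0^+)=\{\beta\}\cup\onset((\phi_0^s)^+)=\onset((K\phi_0^s)^+),
\]
completing the induction.

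The main (minor) obstacle is the $K$-clause: one must be sure that passing from $\phi_0$ to $\phi_0^s$ inside a $K$ does not perturb the ordinals chosen further inside, and that $s$ interacts correctly with $K$ in the sense that $(K\phi_0)^s\equiv K(\phi_0^s)$. Once that substitution identity is justified from $\FV(K\phi_0)=\FV(\phi_0)$, the argument is purely formal, and the conclusion $\alpha=\beta$ falls out immediately.
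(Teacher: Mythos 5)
Your proof is correct and matches the paper's intended argument (the paper's own proof is just ``By induction''): the key point, as you identify, is that substituting $K$-free numerals leaves $\onset$ of every stratified subformula unchanged, so the stratifier selects the same ordinal at each $K$-node, and hence $\alpha=\beta$. The only cosmetic wrinkle is the $\forall x$ case, where $(\forall x\,\phi_0)^s$ substitutes numerals only for the free variables of $\phi_0$ other than $x$, so it is not literally of the form $\forall x\,(\phi_0^{s'})$ for a (total) assignment $s'$; the induction hypothesis should therefore be phrased for arbitrary partial substitutions of numerals for free variables, which changes nothing essential since the argument only uses that numerals contain no $K$-operators.
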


\begin{proof}
By induction.
\end{proof}

\begin{lemma}
\label{depthandstratifier}
Suppose $\phi$ and $\psi$ are $\LEA$-formulas and $\bullet^+$ is a stratifier.
Let $\alpha,\beta\in\epom$ be such that $(K\phi)^+\equiv K^\alpha\phi^+$
and $(K\psi)^+\equiv K^\beta\psi^+$.
Then $\depth(\phi)<\depth(\psi)$ if and only if $\alpha<\beta$.
\end{lemma}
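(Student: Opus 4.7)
The plan is to strengthen the statement to one provable by straightforward induction on formula structure. Enumerate the stratifier's index set as $X=\{x_0<x_1<x_2<\cdots\}$ in increasing order. My key claim is that for every $\LEA$-formula $\chi$,
\[
\onset(\chi^+)=\{x_0,x_1,\ldots,x_{\depth(\chi)-1}\},
\]
where the right-hand side is understood to be $\emptyset$ when $\depth(\chi)=0$. Granting this, if $(K\phi)^+\equiv K^\alpha\phi^+$ then by Definition \ref{stratifierdefn} we have $\alpha=\min(X\setminus\onset(\phi^+))=x_{\depth(\phi)}$, and similarly $\beta=x_{\depth(\psi)}$. Because the enumeration $n\mapsto x_n$ is strictly increasing, $\alpha<\beta$ if and only if $\depth(\phi)<\depth(\psi)$, which is the lemma.

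I would prove the claim by induction on $\chi$. The atomic case is immediate since $\chi^+\equiv\chi$ has no superscripts and $\depth(\chi)=0$. For $\chi\equiv\neg\chi_0$ or $\chi\equiv\forall x\chi_0$, neither $\onset(\bullet^+)$ nor $\depth(\bullet)$ changes, so the inductive hypothesis transfers directly. For $\chi\equiv\chi_1\rightarrow\chi_2$, one has $\onset(\chi^+)=\onset(\chi_1^+)\cup\onset(\chi_2^+)$ and $\depth(\chi)=\max(\depth(\chi_1),\depth(\chi_2))$; the inductive hypothesis rewrites the union as the initial segment of $X$ of length $\max(\depth(\chi_1),\depth(\chi_2))$. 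For $\chi\equiv K\chi_0$, the inductive hypothesis gives $\onset(\chi_0^+)=\{x_0,\ldots,x_{\depth(\chi_0)-1}\}$, so Definition \ref{stratifierdefn} selects $x_{\depth(\chi_0)}$ as the new superscript; thus $\onset(\chi^+)=\{x_0,\ldots,x_{\depth(\chi_0)}\}$ and $\depth(\chi)=\depth(\chi_0)+1$, matching the claim.

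The main conceptual step is recognizing that one should strengthen from a statement about the top-level ordinal to the set-level statement that $\onset(\chi^+)$ is always an initial segment of $X$ of length $\depth(\chi)$. Once that strengthening is in place the induction is routine bookkeeping; the downward-closed structure in $X$ survives each step precisely because the stratifier's rule always selects the least unused element of $X$, so no ordinal is ever skipped, and the order on superscripts thereby tracks the order on depths exactly.
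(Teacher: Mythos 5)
Your proposal is correct, and it takes the same route the paper intends: the paper's proof is simply ``by induction,'' and the natural inductive invariant is exactly the one you isolate, namely that $\onset(\chi^+)$ is always the initial segment of $X$ of length $\depth(\chi)$, from which $\alpha=x_{\depth(\phi)}$ and $\beta=x_{\depth(\psi)}$ and the equivalence follows. (Only cosmetic caveat: $X$ need not have order type $\omega$, but since only finitely many elements of $X$ ever appear in $\onset(\chi^+)$, your increasing enumeration of the first $\omega$ elements suffices.)
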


\begin{proof}
By induction.
\end{proof}

\begin{definition}
For any $\Lom$-structure $\M$ and stratifier $\bullet^+$, let $\M^+$ be the $\LEA$-structure
that has the same universe and interpretation of $\LPA$ as $\M$, and that interprets $K$ so that
for any $\LEA$-formula $\phi$ and assignment $s$,
\[
\mbox{$\M^+\models K\phi[s]$ if and only if $\M\models (K\phi)^+[s]$.}
\]
\end{definition}

It is easy to check that if $\M$ is an $\Lom$-structure then $\M^+$ really is an $\LEA$-structure (it satisfies
Conditions 1--3 of Definition \ref{baselogicdefn}).
From now on we will suppress this remark when defining new structures.

\begin{lemma}
\label{structuregrowingmagic}
Let $\M$ be an $\Lom$-structure, $\bullet^+$ a stratifier.   For any $\LEA$-formula $\phi$ and assignment $s$,
\[
\mbox{$\M^+\models\phi[s]$ if and only if $\M\models \phi^+[s].$}
\]
\end{lemma}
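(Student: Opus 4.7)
The plan is to prove the equivalence by induction on the complexity of the $\LEA$-formula $\phi$, with the assignment $s$ allowed to vary.

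For the base case, when $\phi$ is atomic, I observe that $\phi^+\equiv\phi$ by clause~1 of Definition~\ref{stratifierdefn}, and $\M^+$ and $\M$ share the same universe and $\LPA$-interpretation by construction. Hence $\M^+\models\phi[s]$ iff $\M\models\phi[s]$ iff $\M\models\phi^+[s]$.

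For the inductive step, the propositional and first-order cases ($\phi_1\rightarrow\phi_2$, $\neg\phi_1$, $\forall x\phi_1$) unfold semantically in the usual way: since $\bullet^+$ commutes with these connectives and quantifiers (clause~2 of Definition~\ref{stratifierdefn}), each step reduces to the induction hypothesis applied to strictly simpler subformulas, using in the $\forall$-case that the hypothesis holds for every variant assignment $s(x|a)$. The one modal case, $\phi\equiv K\phi_0$, is actually the easiest: it follows immediately from the defining clause of $\M^+$, which stipulates $\M^+\models K\phi_0[s]$ iff $\M\models(K\phi_0)^+[s]$, and $(K\phi_0)^+$ is precisely $\phi^+$ by clause~3 of Definition~\ref{stratifierdefn}. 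So the modal step does not even require the induction hypothesis.

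There is no real obstacle here: the lemma is engineered so that the only non-trivial clause in the definition of $\M^+$ is exactly what is needed to handle the $K$-case, and the remaining clauses of $\bullet^+$ were chosen to commute with the classical connectives precisely so the induction goes through. I anticipate the proof to be a few lines, of the sort the authors summarize as ``straightforward induction.''
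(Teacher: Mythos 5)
Your proof is correct and matches the paper's approach: the paper dismisses this lemma with ``A straightforward induction,'' and your write-up is precisely that induction, correctly noting that the $K$-case is discharged directly by the defining clause of $\M^+$ while the remaining cases follow from the induction hypothesis and the fact that $\bullet^+$ commutes with the connectives and quantifiers.
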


\begin{proof}
A straightforward induction.
\end{proof}

\begin{definition}
For any $\Lom$-formula $\phi$, $\phi^-$ is the $\LEA$-formula obtained by changing every operator of the form $K^\alpha$ in $\phi$
into $K$.
If $T$ is an $\Lom$-theory, $T^-=\{\phi^-\,:\,\phi\in T\}$.
\end{definition}

\begin{example}
$\left(K^{\omega\cdot 8+3}\forall x K^{17}(x=y)\right)^- \,\equiv\, K\forall x K(x=y).$
\end{example}

\begin{lemma}
\label{obviouslemma}
Let $\bullet^+$ be a stratifier.
For any $\LEA$-formula $\phi$, $(\phi^+)^-\equiv\phi$.
\end{lemma}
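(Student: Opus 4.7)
The plan is a routine structural induction on the $\LEA$-formula $\phi$, following the recursive definition of the stratifier $\bullet^+$ given in Definition \ref{stratifierdefn} and the (implicit) recursive behavior of $\bullet^-$.

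First I would handle the atomic base case: if $\phi$ is atomic, then by clause 1 of Definition \ref{stratifierdefn}, $\phi^+\equiv\phi$, and since $\phi$ contains no operators of the form $K^\alpha$, applying $\bullet^-$ leaves it unchanged, so $(\phi^+)^-\equiv\phi^-\equiv\phi$. Next I would dispatch the propositional and quantifier cases simultaneously: if $\phi$ is $\phi_1\rightarrow\phi_2$, $\neg\phi_1$, or $\forall x\phi_1$, then clause 2 gives $\phi^+\equiv\phi_1^+\rightarrow\phi_2^+$, $\neg\phi_1^+$, or $\forall x\phi_1^+$, respectively. Since $\bullet^-$ commutes with the logical connectives and the quantifier (it only touches operators $K^\alpha$), we get, e.g., $(\phi^+)^-\equiv(\phi_1^+)^-\rightarrow(\phi_2^+)^-$, which by the inductive hypothesis equals $\phi_1\rightarrow\phi_2\equiv\phi$. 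The negation and universal cases are identical.

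The only substantive case is the operator case: if $\phi\equiv K\phi_0$, then clause 3 of Definition \ref{stratifierdefn} gives $\phi^+\equiv K^\alpha\phi_0^+$ for some $\alpha\in X\setminus\onset(\phi_0^+)$. Applying $\bullet^-$ converts the leading $K^\alpha$ back to $K$ and recursively strips superscripts inside $\phi_0^+$, yielding $(\phi^+)^-\equiv K(\phi_0^+)^-$. By the inductive hypothesis $(\phi_0^+)^-\equiv\phi_0$, so $(\phi^+)^-\equiv K\phi_0\equiv\phi$.

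There is essentially no obstacle: the specific ordinal $\alpha\in X$ chosen by the stratifier is irrelevant because $\bullet^-$ discards all such superscripts uniformly, so no bookkeeping of $X$ or $\onset(\phi_0^+)$ is needed. The only thing to be mildly careful about is that $\bullet^-$ is defined directly (not recursively) in the excerpt, so in the write-up I would briefly observe that the replacement of every $K^\alpha$ by $K$ obviously commutes with $\rightarrow$, $\neg$, $\forall x$, and sends $K^\alpha\psi$ to $K\psi^-$; once that is noted the induction goes through mechanically.
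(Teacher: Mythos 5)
Your proof is correct and is exactly the routine structural induction the paper has in mind when it dismisses this lemma as ``Straightforward''; the key observation that $\bullet^-$ discards every superscript uniformly, so the particular ordinal $\alpha$ chosen in clause 3 of Definition \ref{stratifierdefn} is irrelevant, is the whole content of the argument. Nothing further is needed.
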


\begin{proof}
Straightforward.
\end{proof}

%
%

\begin{definition}
If $\M$ is an $\LEA$-structure, let $\M^-$ be the $\Lom$-structure that has the same universe as $\M$,
agrees with $\M$ on $\LPA$, and interprets each $K^\alpha$
so that for any $\Lom$-formula $\phi$ and assignment $s$,
\[
\mbox{$\M^-\models K^\alpha\phi[s]$ if and only if $\M\models K\phi^-[s]$.}
\]
\end{definition}

In \cite{carlson2000} (Definition 5.4), $\M^-$ is the \emph{stratification of $\M$ over $\epom$}.

\begin{lemma}
\label{structureshrinkingmagic}
For any $\LEA$-structure $\M$, $\Lom$-formula $\phi$ and assignment $s$,
\[
\mbox{$\M^-\models\phi[s]$ if and only if $\M\models\phi^-[s]$.}
\]
\end{lemma}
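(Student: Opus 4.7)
The plan is to prove this by straightforward induction on the complexity of the $\Lom$-formula $\phi$, mirroring the structure of Lemma \ref{structuregrowingmagic} but dualized. The whole proof should go through smoothly because the definition of $\M^-$ has been tailored precisely to make the operator case trivial.

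First I would handle the base case: if $\phi$ is atomic, then $\phi^-\equiv\phi$, and since $\M^-$ has the same universe as $\M$ and agrees with $\M$ on the interpretation of $\LPA$, atomic satisfaction under $s$ in $\M^-$ coincides with atomic satisfaction in $\M$. Next, for the Boolean and quantifier cases, $\bullet^-$ commutes with $\rightarrow$, $\neg$, and $\forall x$ by definition (just as for $\bullet^+$ in Definition \ref{stratifierdefn}), so the inductive step reduces to applying the induction hypothesis to subformulas in the usual way. For example, for $\phi\equiv(\rho\rightarrow\sigma)$, we have $\phi^-\equiv(\rho^-\rightarrow\sigma^-)$, and $\M^-\models\phi[s]$ iff ($\M^-\not\models\rho[s]$ or $\M^-\models\sigma[s]$) iff ($\M\not\models\rho^-[s]$ or $\M\models\sigma^-[s]$) iff $\M\models\phi^-[s]$. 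The quantifier step uses alphabetic invariance and the usual interpretation of $\forall$ together with the IH applied to $\phi_0$ under $s(x|a)$ for each $a$ in the universe.

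The only non-Boolean case is $\phi\equiv K^\alpha\phi_0$, and here the induction hypothesis is not even needed: by the defining clause of $\M^-$, $\M^-\models K^\alpha\phi_0[s]$ iff $\M\models K\phi_0^-[s]$, while by definition of $\bullet^-$ we have $\phi^-\equiv K\phi_0^-$, so $\M\models\phi^-[s]$ iff $\M\models K\phi_0^-[s]$. The two conditions are identical.

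The main ``obstacle'' is really just verifying that the three conditions of Definition \ref{baselogicdefn} are respected, so that the proof mentions satisfaction of operator-formulas and existential/universal quantifiers without any confusion about free variables (this is why the excerpt explicitly remarks after the definition of $\M^+$ that the construction yields a genuine $\LEA$-structure, and promises to suppress similar remarks henceforth). In practice, there is nothing substantive to do beyond writing the induction cleanly; this lemma is essentially the mirror image of Lemma \ref{structuregrowingmagic} and, together with Lemma \ref{obviouslemma}, records that $\bullet^+$, $\bullet^-$, and the corresponding structure operations form compatible translations between $\LEA$ and $\Lom$.
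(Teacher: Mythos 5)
Your proof is correct and follows exactly the route the paper intends: the paper's own proof is just the phrase ``a straightforward induction,'' and your case analysis (atomic case from agreement on $\LPA$, Boolean and quantifier cases by the induction hypothesis, and the $K^\alpha$ case immediate from the defining clause of $\M^-$ without needing the IH) is precisely that induction written out. No gaps.
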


\begin{proof}
A straightforward induction.
\end{proof}

\begin{theorem}
\label{stratifiersrespectvalidity}
\item
\begin{enumerate}
\item For any valid $\Lom$-formula $\phi$, $\phi^-$ is valid.
\item
For any $\LEA$-formula $\phi$ and stratifier $\bullet^+$, $\phi$ is valid if and only if $\phi^+$ is valid.
\end{enumerate}
\end{theorem}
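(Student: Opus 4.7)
The plan is to derive both parts essentially by chasing structures back and forth through the $(\cdot)^+$ and $(\cdot)^-$ constructions, using the transfer lemmas already established (Lemmas \ref{structuregrowingmagic}, \ref{structureshrinkingmagic}, and \ref{obviouslemma}). No induction on formulas should be needed at this stage, since the induction has been absorbed into the transfer lemmas.

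For part (1), I would argue by contrapositive on structures. Assume $\phi$ is a valid $\Lom$-formula and let $\M$ be an arbitrary $\LEA$-structure with an arbitrary assignment $s$. I want to show $\M\models\phi^-[s]$. By Lemma \ref{structureshrinkingmagic} applied to $\M$ and $\phi$, $\M\models\phi^-[s]$ if and only if $\M^-\models\phi[s]$. Since $\M^-$ is an $\Lom$-structure and $\phi$ is valid, the latter holds. Hence $\phi^-$ is valid.

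For the forward direction of part (2), suppose $\phi$ is a valid $\LEA$-formula and let $\M$ be an arbitrary $\Lom$-structure with assignment $s$. Apply Lemma \ref{structuregrowingmagic}: $\M\models\phi^+[s]$ iff $\M^+\models\phi[s]$, and the right-hand side holds because $\M^+$ is an $\LEA$-structure and $\phi$ is valid. So $\phi^+$ is valid. For the reverse direction, suppose $\phi^+$ is valid. Then part (1), applied to the $\Lom$-formula $\phi^+$, gives that $(\phi^+)^-$ is valid; but by Lemma \ref{obviouslemma}, $(\phi^+)^-\equiv\phi$, so $\phi$ itself is valid.

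I do not expect any serious obstacle: the entire argument is a two-line diagram chase for each implication, and the potentially delicate point, namely that the semantic translations $\M\mapsto\M^-$ and $\M\mapsto\M^+$ commute with the syntactic translations $\phi\mapsto\phi^-$ and $\phi\mapsto\phi^+$, has already been dispatched in the preceding lemmas. The only thing one has to be mildly careful about is making sure the structures $\M^-$ and $\M^+$ are genuinely structures of the target language, but the author has already flagged that this is immediate and has adopted the convention of suppressing the verification.
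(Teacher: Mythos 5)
Your proposal is correct and follows essentially the same route as the paper: each direction is a short structure-chase through Lemmas \ref{structuregrowingmagic}, \ref{structureshrinkingmagic}, and \ref{obviouslemma}, quantifying over arbitrary structures and assignments. Your only deviation is cosmetic---you derive the reverse direction of part (2) by citing part (1) applied to $\phi^+$, whereas the paper inlines that same argument---so the two proofs coincide in substance.
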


\begin{proof}
\item
(1) Let $\phi$ be a valid $\Lom$-formula.
For any $\LEA$-structure $\M$ and assignment $s$, since $\phi$ is valid, $\M^-\models\phi[s]$
and so by Lemma \ref{structureshrinkingmagic}, $\M\models\phi^-[s]$.
By arbitrariness of $\M$ and $s$, $\phi^-$ is valid.

\item
(2, $\Rightarrow$)
Assume $\phi$ is a valid $\LEA$-formula.
For any $\Lom$-structure $\M$ and assignment $s$, since $\phi$ is valid, $\M^+\models\phi[s]$,
and $\M\models\phi^+[s]$ by Lemma \ref{structuregrowingmagic}.  By arbitrariness of $\M$ and $s$,
this shows $\phi^+$ is valid.

\item
(2, $\Leftarrow$)
Assume $\phi$ is an $\LEA$-formula and $\phi^+$ is valid.
For any $\LEA$-structure $\M$ and assignment $s$, since $\phi^+$ is valid, $\M^-\models\phi^+[s]$,
and $\M\models (\phi^+)^-[s]$ by Lemma \ref{structureshrinkingmagic}.
By Lemma \ref{obviouslemma}, $\M\models\phi[s]$.  By arbitrariness of $\M$ and $s$,
$\phi$ is valid.
\end{proof}

\begin{definition}
\label{oplusdefn}
For any $\LEA$-theory $T$, let
\[
T^\oplus=\{\phi^+\,:\,\mbox{$\phi\in T$ and $\bullet^+$ is a stratifier}\}.
\]
\end{definition}

\begin{example}
Suppose $T$ is the $\LEA$-theory consisting of $K\phi\rightarrow KK\phi$ for all $\LPA$-sentences $\phi$.
Then $T^\oplus$ is the $\Lom$-theory consisting of
$K^\alpha\phi\rightarrow K^\beta K^\alpha\phi$ for all $\LPA$-sentences $\phi$ and ordinals $\alpha<\beta<\epom$.
\end{example}

\begin{theorem}
\label{proofstrat}
(Upward proof stratification)
For any $\LEA$-theory $T$, $\LEA$-sentence $\phi$, and stratifier $\bullet^+$,
the following are equivalent.
\begin{align*}
\mbox{1. $T\models\phi$.} && \mbox{2. $T^+\models\phi^+$.} && \mbox{3. $T^\oplus\models\phi^+$.}
\end{align*}
%
%
%
%
\end{theorem}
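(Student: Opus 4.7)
The plan is to prove the three statements equivalent by a cycle $(1)\Rightarrow(2)\Rightarrow(3)\Rightarrow(1)$, using the translation lemmas between $\M$, $\M^+$, $\M^-$, and between $\phi$, $\phi^+$, $\phi^-$ that have already been set up. Nothing new needs to be invented; each implication is a short exercise in chasing these lemmas, so the real work amounts to organizing the bookkeeping.

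For $(1)\Rightarrow(2)$, I would take an arbitrary $\Lom$-structure $\M$ with $\M\models T^+$ and form the $\LEA$-structure $\M^+$. Lemma \ref{structuregrowingmagic} gives, for each $\psi\in T$, that $\M^+\models\psi$ iff $\M\models\psi^+$, so $\M^+\models T$. By hypothesis $\M^+\models\phi$, and one more application of Lemma \ref{structuregrowingmagic} yields $\M\models\phi^+$. For $(2)\Rightarrow(3)$, the observation is simply that $\bullet^+$ is itself one of the stratifiers allowed in Definition \ref{oplusdefn}, so $T^+\subseteq T^\oplus$; any $\Lom$-structure satisfying $T^\oplus$ automatically satisfies $T^+$, so $\phi^+$ follows from (2).

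For $(3)\Rightarrow(1)$, I would let $\M$ be an arbitrary $\LEA$-structure satisfying $T$ and form $\M^-$. Given any $\psi\in T$ and any stratifier $\bullet^*$, Lemma \ref{obviouslemma} gives $(\psi^*)^-\equiv\psi$, so $\M\models(\psi^*)^-$; by Lemma \ref{structureshrinkingmagic} this is equivalent to $\M^-\models\psi^*$. Thus $\M^-\models T^\oplus$, and by hypothesis $\M^-\models\phi^+$. Applying Lemma \ref{structureshrinkingmagic} once more and then Lemma \ref{obviouslemma} converts this into $\M\models\phi$, completing the cycle.

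The only step with any subtlety is $(3)\Rightarrow(1)$, where one must notice that the same structure $\M^-$ witnesses $\psi^*$ simultaneously for \emph{every} stratifier $\bullet^*$; this is precisely where the passage from $T^+$ to the bigger theory $T^\oplus$ pays off, and it is what makes the $\oplus$-construction useful later in the paper. I do not anticipate any serious obstacle: the cycle closes cleanly and all three arrows reduce to bookkeeping on top of Lemmas \ref{structuregrowingmagic}, \ref{obviouslemma}, and \ref{structureshrinkingmagic}.
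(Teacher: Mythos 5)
Your proposal is correct, but it takes a genuinely different route from the paper. The paper's proof is syntactic in flavor: for $(1\Rightarrow 2)$ it invokes compactness (Theorem \ref{completenesscompactness}, part 4) to extract $\tau_1,\ldots,\tau_n\in T$ with $\left(\bigwedge_i\tau_i\right)\rightarrow\phi$ valid, then applies Theorem \ref{stratifiersrespectvalidity} to transfer validity of that single formula across the stratifier; for $(3\Rightarrow 1)$ it again uses compactness on $T^\oplus$, writes each witness as $\sigma_i^i$ for possibly different stratifiers $\bullet^i$, and pushes the finite implication down with $\bullet^-$ and Lemma \ref{obviouslemma}. You instead argue directly at the level of models: given an arbitrary model of $T^+$ (resp.\ of $T$) you apply the $\M\mapsto\M^+$ (resp.\ $\M\mapsto\M^-$) construction to the whole theory at once, using Lemmas \ref{structuregrowingmagic}, \ref{structureshrinkingmagic}, and \ref{obviouslemma}. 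In effect you inline the proof of Theorem \ref{stratifiersrespectvalidity} and bypass compactness entirely, which makes your argument slightly more elementary and self-contained; the paper's version buys the convenience of reusing Theorem \ref{stratifiersrespectvalidity} as a black box and keeps the reasoning at the level of single valid formulas. You also correctly isolate the one subtle point in $(3\Rightarrow 1)$: a single structure $\M^-$ satisfies $\psi^*$ for \emph{every} stratifier $\bullet^*$ simultaneously, because $\M^-$ interprets all the operators $K^\alpha$ uniformly via $\psi\mapsto\psi^-$. Both proofs are sound.
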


This theorem is so-named because it is an upside-down version of a harder theorem
that we called \cite{alexanderdissert} \emph{proof stratification}.
%
%
In non-upward proof stratification, $T$ and $\phi$ are taken in the \emph{stratified} language
and the theorem states that $T\models\phi$ if and only if $T^-\models\phi^-$.
This uses complicated hypotheses on $T$ and $\phi$.
Versions of these hypotheses could be stated in an elementary way,
but \emph{a priori} they might imply $T$ is inconsistent (in which case Theorem \ref{proofstrat}
is trivial).  The only way we know to exhibit consistent theories that satisfy such hypotheses
is to exploit the machinery from \cite{carlson1999} on the $\Sigma_1$-structure of the ordinals.

\begin{proof}[Proof of Theorem~\ref{proofstrat}]
Let $T$, $\phi$ and $\bullet^+$ be as in Theorem \ref{proofstrat}.

\item
($1\Rightarrow 2$) Assume $T\models\phi$.  By Theorem \ref{completenesscompactness},
there are $\tau_1,\ldots,\tau_n\in T$ such that $\left(\bigwedge_i\tau_i\right)\rightarrow\phi$
is valid.  By Theorem \ref{stratifiersrespectvalidity},
$\left(\bigwedge_i\tau^+_i\right)\rightarrow\phi^+$ is valid,
showing $T^+\models\phi^+$.

\item
($2\Rightarrow 3$) Trivial: $T^+\subseteq T^\oplus$.

\item
($3\Rightarrow 1$)
Assume $T^\oplus\models\phi^+$.
By Theorem \ref{completenesscompactness}
there are $\tau_1,\ldots,\tau_n\in T^\oplus$
such that $\left(\bigwedge_i\tau_i\right)\rightarrow\phi^+$ is valid.
By definition of $T^\oplus$ there are $\sigma_1,\ldots,\sigma_n\in T$
and stratifiers $\bullet^1,\ldots,\bullet^n$
such that each $\tau_i\equiv\sigma^i_i$.
By Lemma \ref{obviouslemma}
\[
\mbox{$\left(\left(\bigwedge_i\sigma^i_i\right)\rightarrow\phi^+\right)^- \,\equiv\,
      \left(\bigwedge_i\sigma_i  \right)\rightarrow\phi$},
\]
so Theorem \ref{stratifiersrespectvalidity} guarantees
$\left(\bigwedge_i\sigma_i\right)\rightarrow\phi$
is valid, and $T\models\phi$.
\end{proof}

\section{Uniform Theories and Collapsing Knowledge}
\label{uniformsect}

\begin{definition}
Suppose $X\subseteq \epom$ and $h:X\to\epom$.
For any $\Lom$-formula $\phi$, we define $h(\phi)$ to be the $\Lom$-formula
obtained by replacing $K^\alpha$ by $K^{h(\alpha)}$ everywhere $K^{\alpha}$
occurs in $\phi$ ($\alpha\in X$).
(If $\alpha\not\in X$, we do not change occurrences of $K^\alpha$ in $\phi$.)
\end{definition}

\begin{example}Suppose $\alpha_1<\cdots<\alpha_4$ are distinct ordinals in $\omega\cdot\omega$.
Let $X=\{\alpha_2,\alpha_3\}$, let $h(\alpha_2)=\alpha_3$, $h(\alpha_3)=\alpha_4$.
Then
\[
h\left(K^{\alpha_3}K^{\alpha_2}K^{\alpha_1}(1=1)\right) \,\equiv\,
K^{\alpha_4}K^{\alpha_3}K^{\alpha_1}(1=1).
\]
\end{example}


\begin{definition}
\label{uniformdefn}
An $\Lom$-theory $T$ is \emph{uniform}
if the following statement holds.
For all $X\subseteq\epom$, for all order-preserving $h:X\to\epom$,
for all $\phi\in T$, if $\onset(\phi)\subseteq X$ then $h(\phi)\in T$.
\end{definition}

\begin{example}If $T$ contains $K^1K^0(1=0)$ and $T$ is uniform, then $T$ must contain
$K^\beta K^\alpha(1=0)$ for all $\alpha<\beta\in\epom$.
\end{example}

\begin{lemma}
\label{uniformityofstratifiers}
Suppose $\bullet^+$ is a stratifier, $X\subseteq\epom$,
$h:X\to\epom$ is order preserving,
and $\phi$ is an $\LEA$-formula with $\onset(\phi^+)\subseteq X$.
There is a stratifier $\bullet^*$ such that $\phi^*\equiv h(\phi^+)$.
\end{lemma}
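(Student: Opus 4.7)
The plan is to construct $\bullet^*$ as the stratifier given by a carefully padded infinite set $Z\subseteq\epom$. Let $Y$ be the infinite subset of $\epom$ such that $\bullet^+$ is the stratifier given by $Y$, and set $F=\onset(\phi^+)$, a finite subset of $Y\cap X$. A preliminary induction on $\LEA$-formulas $\psi$ shows that $\onset(\psi^+)$ is always a finite initial segment of $Y$ under the ordering inherited from $\epom$: the atomic case is vacuous, boolean and quantifier cases follow because a union of initial segments in a linear order is again an initial segment, and for $\psi\equiv K\psi_0$ the stratifier clause adjoins $\min(Y\setminus\onset(\psi_0^+))$, extending the initial segment by one element. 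In particular, $\onset(\psi^+)$ is an initial segment of $F$ for every subformula $\psi$ of $\phi$.

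I would then take $Z=h(F)\cup\{\gamma\in\epom:\gamma>\max h(F)\}$, which is infinite (since $\epom$ is an infinite ordinal and $h(F)$ is finite) and which has $h(F)$ as an initial segment. Let $\bullet^*$ be the stratifier given by $Z$. The core claim is that $\psi^*\equiv h(\psi^+)$ for every subformula $\psi$ of $\phi$; specializing $\psi=\phi$ then gives the lemma. Atomic, propositional and quantifier cases are routine. For $\psi\equiv K\psi_0$, write $\psi^+\equiv K^\alpha\psi_0^+$ and $\psi^*\equiv K^\beta\psi_0^*$; by induction $\psi_0^*\equiv h(\psi_0^+)$, and since $\onset(\psi_0^+)\subseteq F\subseteq X$ lies in the domain of $h$, we get $\onset(\psi_0^*)=h(\onset(\psi_0^+))$. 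The preliminary observation says $\alpha$ is the immediate $Y$-successor of the initial segment $\onset(\psi_0^+)$, and since $\alpha\in F$ this is also the immediate $F$-successor; order-preservation of $h$ then makes $h(\alpha)$ the immediate $h(F)$-successor of $h(\onset(\psi_0^+))$. Because every padding ordinal in $Z$ exceeds $\max h(F)\geq h(\alpha)$, this coincides with $\min(Z\setminus h(\onset(\psi_0^+)))=\beta$, whence $\beta=h(\alpha)$ and $\psi^*\equiv K^{h(\alpha)}h(\psi_0^+)\equiv h(\psi^+)$.

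The main obstacle is the bookkeeping: one must keep track that $\onset(\psi^+)$ forms an initial segment inside $Y$, that this property is preserved by $h$ into $h(F)$, and that $Z$ is padded high enough above $\max h(F)$ so that the stratifier using $Z$ never reaches for a padding ordinal in place of an element of $h(F)$. Once these invariants are arranged, the key equality $\beta=h(\alpha)$ falls out directly from order-preservation of $h$, and the rest of the induction is mechanical.
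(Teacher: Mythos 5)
Your construction of the padded set $Z=h(\onset(\phi^+))\cup\{\gamma\in\epom:\gamma>\max h(\onset(\phi^+))\}$ and the induction over subformulas is exactly the paper's proof (the paper writes $Y=Y_0\cup\{\beta\in\epom:\beta>Y_0\}$ with $Y_0=h(\onset(\phi^+))$ and leaves the induction to the reader). Your preliminary observation that $\onset(\psi^+)$ is always a finite initial segment of the generating set is the right invariant and correctly fills in the details the paper omits.
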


\begin{proof}
Let $Y_0=\{h(\alpha)\,:\,\alpha\in \onset(\phi^+)\}$, $Y=Y_0\cup\{\beta\in\epom\,:\,\beta>Y_0\}$,
and let $\bullet^*$ be the stratifier given by $Y$.
By induction, for every subformula $\phi_0$ of $\phi$, $\phi_0^*\equiv h(\phi_0^+)$.
\end{proof}

\begin{lemma}
\label{uniformitylemma}
(Uniformity lemma)
For any $\LEA$-theory $T$, $T^\oplus$ is uniform.
\end{lemma}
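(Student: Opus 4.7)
The plan is to reduce the claim directly to Lemma~\ref{uniformityofstratifiers}, which was clearly set up for exactly this purpose. Unwinding the definition of uniformity, I need to show that whenever $\psi \in T^\oplus$, $X \subseteq \epom$ with $\onset(\psi) \subseteq X$, and $h: X \to \epom$ is order-preserving, then $h(\psi) \in T^\oplus$.

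First I would unpack membership in $T^\oplus$: by Definition~\ref{oplusdefn}, $\psi$ has the form $\phi^+$ for some $\phi \in T$ and some stratifier $\bullet^+$. Then $\onset(\phi^+) = \onset(\psi) \subseteq X$, so the hypotheses of Lemma~\ref{uniformityofstratifiers} are satisfied. Applying that lemma produces a stratifier $\bullet^*$ with $\phi^* \equiv h(\phi^+)$. Therefore $h(\psi) \equiv \phi^*$, which lies in $T^\oplus$ by Definition~\ref{oplusdefn} (witnessed by $\phi \in T$ and the stratifier $\bullet^*$).

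There is essentially no obstacle here; all the real work was already done in Lemma~\ref{uniformityofstratifiers}, where one had to build a stratifier whose assigned ordinals on $\phi$ coincide with the $h$-images of the original assigned ordinals. The only thing to double-check in writing this up is that the map $h$ in Definition~\ref{uniformdefn} is required only to be order-preserving (not a bijection on $\epom$), which matches exactly the hypothesis in Lemma~\ref{uniformityofstratifiers}, so nothing further is needed.
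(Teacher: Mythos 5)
Your proposal is correct and follows exactly the same route as the paper's own proof: unpack $\psi\equiv\phi^+$ via Definition~\ref{oplusdefn}, apply Lemma~\ref{uniformityofstratifiers} to obtain a stratifier $\bullet^*$ with $\phi^*\equiv h(\phi^+)$, and conclude $h(\psi)\in T^\oplus$. Nothing is missing.
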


\begin{proof}
Let $X\subseteq\epom$, let $h:X\to\epom$ be order preserving, let $\phi\in T^\oplus$,
and assume $\onset(\phi)\subseteq X$.
By definition of $T^\oplus$, $\phi\equiv\phi^+_0$ for some $\phi_0\in T$ and some stratifier $\bullet^+$.
By Lemma \ref{uniformityofstratifiers} there is a stratifier $\bullet^*$ such that $h(\phi^+_0)\equiv \phi^*_0$.
This shows $h(\phi)\in T^\oplus$.
\end{proof}

Unfortunately, the range of $\oplus$ does not include every uniform $\Lom$-theory.
For example, suppose $T$ is the $\Lom$-theory consisting of
\[K^\alpha(\phi^+\rightarrow\psi^+)\rightarrow K^\alpha\phi^+\rightarrow K^\alpha\psi^+\]
for all $\LEA$-sentences $\phi$ and $\psi$ and stratifiers $\bullet^+$
with $\onset(\phi^+),\onset(\psi^+)<\alpha\in\epom$.
The reader may check that despite being uniform, $T$ is not $T^\oplus_0$ for any $\LEA$-theory $T_0$. 

\begin{definition}
\label{structuremappingdefn}
If $\M$ is an $\Lom$-structure, $X\subseteq \epom$, and $h:X\to\epom$,
we define an $\Lom$-structure $h(\M)$ that has the same universe as $\M$,
agrees with $\M$ on the interpretation of $\LPA$, and interprets
$K^\alpha$ so that for any $\Lom$-formula $\phi$ and assignment $s$,
\[
\mbox{$h(\M)\models K^\alpha\phi[s]$ if and only if $\M\models h(K^\alpha\phi)[s]$.}
\]
\end{definition}

\begin{lemma}
\label{structuremappingmagic}
Suppose $\M$, $X$, and $h$ are as in Definition \ref{structuremappingdefn}.
For any $\Lom$-formula $\phi$ and assignment $s$,
\[
\mbox{$h(\M)\models\phi[s]$
if and only if $\M\models h(\phi)[s]$.}
\]
\end{lemma}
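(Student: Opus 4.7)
The plan is to prove this by induction on the complexity of the $\Lom$-formula $\phi$, mirroring the proofs of the analogous lemmas \ref{structuregrowingmagic} and \ref{structureshrinkingmagic}.

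For the base case, suppose $\phi$ is atomic. Then $\phi$ contains no operators, so $h(\phi)\equiv\phi$. Since $h(\M)$ has the same universe as $\M$ and agrees with $\M$ on the interpretation of $\LPA$, the usual first-order semantics gives $h(\M)\models\phi[s]$ iff $\M\models\phi[s]$ iff $\M\models h(\phi)[s]$.

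For the inductive step, the connective and quantifier cases are routine once one observes that $h$ commutes with $\neg$, $\rightarrow$, and $\forall x$: for instance, $h(\phi_1\rightarrow\phi_2)\equiv h(\phi_1)\rightarrow h(\phi_2)$ and $h(\forall x\phi_0)\equiv \forall x\, h(\phi_0)$, because $h$ only rewrites occurrences of $K^\alpha$. Applying the inductive hypothesis subformula-wise (and for every assignment $s(x|a)$, in the $\forall$ case) closes each of these cases. The operator case $\phi\equiv K^\alpha\phi_0$ is handled directly by Definition \ref{structuremappingdefn}, which was engineered precisely to stipulate $h(\M)\models K^\alpha\phi_0[s]$ iff $\M\models h(K^\alpha\phi_0)[s]$; notably, this case requires no appeal to the inductive hypothesis.

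I do not anticipate any genuine obstacle: the structure $h(\M)$ was defined so that the operator clause is literally the statement of the lemma restricted to formulas with a leading $K^\alpha$, and the remaining clauses pass through the inductive hypothesis cleanly because $h$ respects the formula-building operations. The only thing one must take care to check is that $h$ is applied uniformly, so that the recursive equations $h(\phi_1\rightarrow\phi_2)\equiv h(\phi_1)\rightarrow h(\phi_2)$ etc.\ indeed hold as identities of $\Lom$-formulas, which is immediate from the definition of $h(\phi)$.
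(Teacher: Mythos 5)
Your proof is correct and follows exactly the route the paper intends: the paper's own proof is simply ``by induction,'' and your induction on formula complexity---with the atomic and connective/quantifier cases passing through the inductive hypothesis and the operator case discharged immediately by Definition \ref{structuremappingdefn}---is the standard argument being alluded to. No gaps.
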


\begin{proof}
By induction.
\end{proof}

We will only need part 1 of the next lemma, we state part 2 for completeness.

\begin{lemma}
\label{hpreservesvalidity}
Suppose $\M$, $X$, and $h$ are as in Definition \ref{structuremappingdefn} and $\phi$ is an $\Lom$-formula.
\begin{enumerate}
\item
If $\phi$ is valid then $h(\phi)$ is valid.
\item
Assume $h$ is injective.
If $\onset(\phi)\subseteq X$ and $h(\phi)$ is valid, then $\phi$ is valid.
\end{enumerate}
\end{lemma}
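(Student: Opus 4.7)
The plan is to prove both parts as near-immediate corollaries of Lemma~\ref{structuremappingmagic}, with part (2) reduced to part (1) by inverting $h$ on $X$.

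For part (1), I would fix an arbitrary $\Lom$-structure $\M$ and assignment $s$, then form the structure $h(\M)$ from Definition~\ref{structuremappingdefn}. Since $\phi$ is valid, $h(\M)\models \phi[s]$. By Lemma~\ref{structuremappingmagic} applied to $h(\M)$ (and to the original $h$), this is equivalent to $\M\models h(\phi)[s]$. Since $\M$ and $s$ were arbitrary, $h(\phi)$ is valid.

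For part (2), the idea is to construct a partial inverse $g$ of $h$ and reduce to part (1). Because $h$ is injective on $X$, the function $g:h(X)\to\epom$ defined by $g(h(\alpha))=\alpha$ is well-defined. Now I claim $g(h(\phi))\equiv\phi$: the hypothesis $\onset(\phi)\subseteq X$ ensures that every operator $K^\alpha$ appearing in $\phi$ has $\alpha\in X$, so in forming $h(\phi)$ every such operator is replaced by $K^{h(\alpha)}$; since $\onset(h(\phi))\subseteq h(X)$ is contained in the domain of $g$, applying $g$ then sends each $K^{h(\alpha)}$ back to $K^{g(h(\alpha))}=K^\alpha$, recovering $\phi$. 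Since $h(\phi)$ is valid by hypothesis, part (1) applied to $g$ and the formula $h(\phi)$ gives that $g(h(\phi))$ is valid, i.e.\ $\phi$ is valid.

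No obstacle stands out: the definitions of $h(\phi)$ and $h(\M)$ are set up precisely so that Lemma~\ref{structuremappingmagic} transfers validity under $h$, and the only subtle point is the bookkeeping for part (2)---namely noting that part (1) does not require $h$ to be order-preserving (Definition~\ref{structuremappingdefn} places no such restriction), which is what allows us to apply it to the generally non-monotone inverse $g$. I would make that observation explicit in the write-up to forestall confusion with Definition~\ref{uniformdefn}, where order-preservation is required for a different purpose.
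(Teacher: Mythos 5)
Your proof is correct and follows essentially the same route as the paper: part (1) by transferring validity through the structure $h(\M)$ via Lemma~\ref{structuremappingmagic} (just as in Theorem~\ref{stratifiersrespectvalidity}), and part (2) by applying part (1) to the inverse of $h$ on $h(X)$ and noting $h^{-1}(h(\phi))\equiv\phi$ when $\onset(\phi)\subseteq X$. Your explicit remark that part (1) imposes no order-preservation requirement on $h$ is a worthwhile clarification the paper leaves implicit.
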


\begin{proof}
\item
(1) Similar to Theorem \ref{stratifiersrespectvalidity}.

\item
(2) If $h(\phi)$ is valid then $h^{-1}(h(\phi))$ is valid by part 1.  Since $\onset(\phi)\subseteq X$, $h^{-1}(h(\phi))\equiv\phi$.
\end{proof}

\begin{definition}
For any $\Lom$-theory $T$ and $\alpha\in\epom$,
let $T\cap\alpha=\{\phi\in T\,:\,\onset(\phi)\subseteq\alpha\}$
be the subset of $T$ where all superscripts are strictly bounded by $\alpha$.
\end{definition}

\begin{example}
\item
\begin{itemize}
\item
For any $\Lom$-theory $T$, $T\cap 0=\{\phi\in T\,:\,\mbox{$\phi$ is an $\LPA$-sentence}\}$.
\item
For any $\Lom$-theory $T$, $T\cap 1=\{\phi\in T\,:\,\mbox{$\phi$ is an 
$\LPA\cup\{K^0\}$-sentence}\}$.
\item
For any $\LEA$-theory $T$,
$T^\oplus\cap\omega = \{\phi^+\,:\,\mbox{$\phi\in T$ and $\bullet^+$ is given by some $X\subseteq\omega$}\}$.
\end{itemize}
\end{example}


\begin{theorem}
\label{collapsethm}
(The collapse theorem)
Let $T$ be a uniform $\Lom$-theory.
For any $0<n\in\N$ and $\Lom$-formula $\phi$ with $\onset(\phi)\subseteq\omega\cdot n$,
$T\models\phi$ if and only if $T\cap(\omega\cdot n)\models\phi$.
\end{theorem}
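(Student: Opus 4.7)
The reverse direction is immediate since $T\cap(\omega\cdot n)\subseteq T$. For the forward direction, the plan is to use Theorem~\ref{completenesscompactness}(4) to extract finitely many $\tau_1,\ldots,\tau_k\in T$ with $\left(\bigwedge_i\tau_i\right)\rightarrow\phi$ valid, and then to build an order-preserving $h:X\to\epom$ on the finite set $X=\bigcup_i\onset(\tau_i)$ satisfying (a) $h(\alpha)=\alpha$ for every $\alpha\in\onset(\phi)\cap X$, and (b) $h(X)\subseteq\omega\cdot n$. Given such an $h$, uniformity of $T$ delivers $h(\tau_i)\in T$ since $\onset(\tau_i)\subseteq X$; condition (b) places each $h(\tau_i)$ inside $T\cap(\omega\cdot n)$; condition (a) together with $\onset(\phi)\subseteq\omega\cdot n$ forces $h(\phi)\equiv\phi$; and Lemma~\ref{hpreservesvalidity}(1) applied to the valid implication $\left(\bigwedge_i\tau_i\right)\rightarrow\phi$ yields validity of $\left(\bigwedge_i h(\tau_i)\right)\rightarrow\phi$, proving $T\cap(\omega\cdot n)\models\phi$.

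To construct $h$, partition $X$ into $X_{<}=\{\alpha\in X:\alpha<\omega\cdot n\}$ and $X_{\geq}=\{\alpha\in X:\alpha\geq\omega\cdot n\}$; since $\onset(\phi)\subseteq\omega\cdot n$, the required fixed-point set sits inside $X_{<}$. Let $M$ be $\max X_{<}$ when nonempty and $0$ otherwise, enumerate $X_{\geq}$ as $\eta_1<\cdots<\eta_q$, define $h$ to be the identity on $X_{<}$, and set $h(\eta_j)=M+j$. Since $n\geq 1$, the ordinal $\omega\cdot n$ is a limit, so $M<\omega\cdot n$ forces $M+j<\omega\cdot n$ for every finite $j$, giving $h(X)\subseteq\omega\cdot n$. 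Each $h(\eta_j)\geq M+1$ strictly exceeds every element of $X_{<}$, and within $X_{\geq}$ the map $\eta_j\mapsto M+j$ is strictly increasing, so a brief case split on whether each of $\alpha<\alpha'$ lies in $X_{<}$ or $X_{\geq}$ confirms $h$ is order-preserving.

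The one real obstacle is recognizing where the action is: the finitely many ordinals $\eta_j\geq\omega\cdot n$ must be squeezed into the open interval $(M,\omega\cdot n)$ while leaving everything in $X_<$ (in particular the fixed set $\onset(\phi)\cap X$) alone. This interval has order type $\omega\cdot n-M\geq\omega$ precisely because $\omega\cdot n$ is a limit ordinal, so there is always more than enough room for the $q$ high ordinals. The construction would break down outright if $\omega\cdot n$ were replaced by a successor, which is exactly why collapse happens at positive integer multiples of $\omega$.
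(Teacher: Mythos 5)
Your proposal is correct and follows essentially the same route as the paper: compactness to reduce to a valid finite implication, then an order-preserving superscript map that fixes the ordinals below $\omega\cdot n$ and squeezes the finitely many ordinals $\geq\omega\cdot n$ into the gap below $\omega\cdot n$ (possible exactly because $\omega\cdot n$ is a limit), followed by uniformity and Lemma~\ref{hpreservesvalidity}. The only difference is cosmetic: you give an explicit formula $h(\eta_j)=M+j$ where the paper merely observes that a suitable $\widetilde{Y}\subseteq\omega\cdot n$ above $X$ exists.
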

%
%
%
%

\begin{proof}
\setlength{\columnsep}{-2.25in}
\begin{multicols}{2}
The $\Leftarrow$ direction is trivial: $T\cap(\omega\cdot n)\subseteq T$.
For $\Rightarrow$, assume $T\models \phi$.
By Theorem \ref{completenesscompactness} there are $\tau_1,\ldots,\tau_n\in T$
such that
\[
\mbox{$\Phi\equiv \left(\bigwedge_i \tau_i\right)\rightarrow\phi$}
\]
is valid.
Let $X=\onset(\Phi)\cap(\omega\cdot n)$, $Y=\onset(\Phi)\cap[\omega\cdot n,\infty)$,
see Fig.~1.
Then $|X|,|Y|<\infty$ and $X\cup Y=\onset(\Phi)$.

\columnbreak

\hspace{2.15in}
{
\vspace{2mm}
\includegraphics[scale=.65]{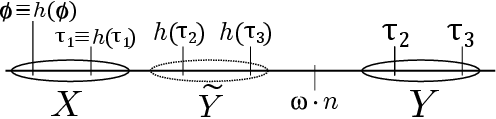}

\vspace{-2.15mm}\hspace{2.65in}{
\parbox[t][1in]{1.5in}{
Figure 1:  Collapse.
}
}
\par
}

\end{multicols}

\vspace{-.685in}
Since $|X|<\infty$ and $\omega\cdot n$ has no maximum element, there are infinitely many
ordinals above $X$ in $\omega\cdot n$.
Thus since $|Y|<\infty$ we can find $\widetilde{Y}\subseteq\omega\cdot n$ such that
$X<\widetilde{Y}$ and $|\widetilde{Y}|=|Y|$.
It follows there is an order preserving function $h:X\cup Y\to X\cup \widetilde{Y}$
such that $h(x)=x$ for all $x\in X$.

By Lemma \ref{hpreservesvalidity}, $h(\Phi)$ is valid.
Since $\onset(\phi)\subseteq \omega\cdot n$, we have $\onset(\phi)\subseteq X$ and $h(\phi)\equiv\phi$.
Thus
\[
\mbox{$h(\Phi) \,\equiv\,\left(\bigwedge_i h(\tau_i)\right)\rightarrow h(\phi) \,\equiv\,\left(\bigwedge_i h(\tau_i)\right)\rightarrow\phi$}.
\]
Since $T$ is uniform, each $h(\tau_i)\in T$.
In fact, since $\mathrm{range}(h)\subseteq\omega\cdot n$, each $h(\tau_i)\in T\cap(\omega\cdot n)$,
and the validity of $\left(\bigwedge_i h(\tau_i)\right)\rightarrow\phi$ witnesses $T\cap(\omega\cdot n)\models\phi$.
\end{proof}

%
%
%
%
%

\begin{definition}
\label{stratifiedmodel}
If $T$ is an $\Lom$-theory, its intended structure is the $\Lom$-structure
$\M_T$ with standard first-order part that interprets the operators of $\Lom$
so that for every $\Lom$-formula $\phi$, assignment $s$, and $\alpha\in\epom$,
\[
\mbox{$\M_T \models K^\alpha\phi[s]$ if and only if $T\cap\alpha\models \phi^s$.}
\]
\end{definition}

\begin{lemma}
\label{scriptmbehavesasintended}
Suppose $T$ is an $\Lom$-theory.
For any $\Lom$-formula $\phi$ and assignment $s$,
$\M_T\models\phi[s]$ if and only if $\M_T\models\phi^s$.
\end{lemma}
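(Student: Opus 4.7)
The plan is to prove this by straightforward induction on the complexity of $\phi$, in direct parallel with the earlier Lemma for $\mathscr{N}_T$ (the one just after Definition \ref{defnofintendedmodel}). In fact, the key observation is that $\M_T$ has standard first-order part, so numerals behave correctly: $\overline{n}^{\M_T}[s] = n$ for every assignment $s$, which handles the atomic case immediately, since $\phi^s$ is just $\phi$ with each free variable $x_i$ replaced by the term $\overline{s(x_i)}$ evaluating to $s(x_i)$.

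For the inductive steps, the Boolean connectives $\neg$ and $\rightarrow$ are routine: substitution of numerals commutes with them, and the induction hypothesis applies term by term. For the universal quantifier case $\phi \equiv \forall x\,\phi_0$, I would note that $(\forall x\,\phi_0)^s \equiv \forall x\,\phi_0^{s'}$, where $s'$ agrees with $s$ except we may ignore the value at $x$ (since $x \notin \FV(\forall x\,\phi_0)$), and then compare $\M_T \models \forall x\,\phi_0[s]$ (i.e., $\M_T \models \phi_0[s(x|a)]$ for every $a \in \N$) with $\M_T \models \forall x\,\phi_0^{s'}$ (i.e., $\M_T \models \phi_0^{s'}[t(x|a)]$ for every $a$ and some/any $t$), using the induction hypothesis together with the identity $\phi_0^{s(x|a)} \equiv \phi_0^{s'}(x|\overline{a})$ and standard substitution properties.

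The only case that genuinely uses the new definition of $\M_T$ is the operator case $\phi \equiv K^\alpha \psi$. Here the trick is that $\FV(K^\alpha\psi)=\FV(\psi)$, so $\psi^s$ is already a sentence and $(K^\alpha \psi)^s \equiv K^\alpha(\psi^s)$. Then applying Definition \ref{stratifiedmodel} to each side:
\[
\M_T \models K^\alpha \psi[s] \iff T\cap\alpha \models \psi^s,
\]
while for any assignment $s_0$,
\[
\M_T \models K^\alpha(\psi^s)[s_0] \iff T\cap\alpha \models (\psi^s)^{s_0} \equiv \psi^s,
\]
since $\psi^s$ is a sentence. Both sides thus reduce to $T\cap\alpha \models \psi^s$. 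Notably, no induction hypothesis is even invoked in this step; the semantics of $K^\alpha$ at $\M_T$ are defined directly in terms of a sentence $\psi^s$, which is exactly the sentence obtained by numeral substitution.

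I do not expect any real obstacle: the only mild subtlety is the universal-quantifier bookkeeping, which is the same bookkeeping used in the analogous lemma for $\mathscr{N}_T$ proved earlier by the authors with the same phrase ``straightforward induction.'' The operator case, which might look like the hardest, is in fact a one-line reduction because substitution of numerals and the definition of $\M_T$ mesh perfectly.
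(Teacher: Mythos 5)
Your proof is correct and matches the paper's approach: the paper proves this lemma simply ``by induction,'' and your case analysis (atomic case via the standard first-order part, routine Boolean and quantifier bookkeeping, and the operator case collapsing to $T\cap\alpha\models\psi^s$ on both sides since $\psi^s$ is a sentence) is exactly the intended fleshing-out of that induction. No gaps.
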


\begin{proof}
By induction.
\end{proof}

Recall from Definition \ref{stratifierdefn} that the veristratifier
is the stratifier given by $X=\{\omega\cdot1,\omega\cdot2,\ldots\}$.

\begin{theorem}
\label{upwardstratificationtheorem}
(The upward stratification theorem)
Let $\bullet^+$ be the veristratifier.
For any $\LEA$-theory $T$, $\LEA$-formula $\phi$, and assignment $s$,
$\mathscr N_T\models\phi[s]$ if and only if $\M_{T^\oplus}\models\phi^+[s]$.
\end{theorem}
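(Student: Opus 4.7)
The plan is to induct on the structure of $\phi$. The atomic case and the inductive steps for $\neg$, $\rightarrow$, and $\forall$ will be routine, because $\phi^+$ preserves these connectives and both $\mathscr N_T$ and $\M_{T^\oplus}$ have standard first-order part; all the content lives in the $K$-case.

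For $\phi \equiv K\psi$, write $(K\psi)^+ \equiv K^\alpha \psi^+$. Unwinding Definitions \ref{defnofintendedmodel} and \ref{stratifiedmodel}, the goal reduces to showing that $T \models \psi^s$ if and only if $T^\oplus \cap \alpha \models (\psi^+)^s$. I will chain three equivalences. First, upward proof stratification (Theorem \ref{proofstrat}) gives $T \models \psi^s$ iff $T^\oplus \models (\psi^s)^+$. Second, a short side induction, using Lemma \ref{assignmentplayswellwithstratifier} to match the superscripts chosen at each $K$, shows $(\psi^s)^+ \equiv (\psi^+)^s$: numeral substitution leaves $\onset$ of every subformula stratification unchanged, so the stratifier makes identical choices either way. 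Third, Lemma \ref{uniformitylemma} makes $T^\oplus$ uniform, so the collapse theorem (Theorem \ref{collapsethm}) yields $T^\oplus \models (\psi^+)^s$ iff $T^\oplus \cap \alpha \models (\psi^+)^s$, provided $\alpha = \omega\cdot n$ for some $n \geq 1$ and $\onset((\psi^+)^s) \subseteq \omega\cdot n$.

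To secure those two hypotheses on $\alpha$, I will prove as a preliminary lemma that, under the veristratifier, for every $\LEA$-formula $\chi$ the set $\onset(\chi^+)$ is an initial segment of $\{\omega, \omega\cdot 2, \omega\cdot 3, \ldots\}$. This is a routine induction: atomic formulas have empty $\onset$; $\neg$ and $\forall$ preserve $\onset$; $\rightarrow$ takes the union of two initial segments, which is again an initial segment; and in the $K$-case the stratifier picks the smallest element of $X\setminus\onset$, extending the initial segment by one step. Applied to $\psi$, if $\onset(\psi^+) = \{\omega,\omega\cdot 2,\ldots,\omega\cdot k\}$ for some $k\geq 0$, then $\alpha = \omega\cdot(k+1)$, a positive integer multiple of $\omega$, and $\onset(\psi^+)\subseteq\alpha$; combined with $\onset((\psi^+)^s) = \onset(\psi^+)$ this is exactly what the collapse theorem requires. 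The step most likely to need care is this initial-segment lemma together with the substitution-commutes-with-stratification identity; the rest is bookkeeping on top of the machinery already developed in Sections \ref{stratifierssectn} and \ref{uniformsect}.
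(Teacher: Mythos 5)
Your proposal is correct and follows essentially the same route as the paper's proof: induct on $\phi$, reduce the $K\psi$ case via Definitions \ref{defnofintendedmodel} and \ref{stratifiedmodel}, and chain upward proof stratification (Theorem \ref{proofstrat}), the identity $(\psi^s)^+\equiv(\psi^+)^s$, and the collapse theorem (Theorem \ref{collapsethm}) applied through Lemma \ref{uniformitylemma}. Your explicit initial-segment lemma just spells out what the paper compresses into ``by definition of the veristratifier, $\alpha=\omega\cdot n$ and $\onset(\psi^+)\subseteq\omega\cdot n$,'' and the order in which you commute substitution with stratification versus applying collapse is an inessential permutation of the paper's steps.
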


Again, the theorem is so-named because it is an upside-down version of
a harder theorem that equates $\M_T\models\phi[s]$ with $\mathscr N_{T^-}\models\phi^-[s]$
for stratified $T$ and $\phi$ under more complicated 
hypotheses.

\begin{proof}[Proof of Theorem \ref{upwardstratificationtheorem}]
By induction on $\phi$.
The only nontrivial case is when $\phi$ is $K\psi$.
Then $\phi^+\equiv K^\alpha\psi^+$ for some $\alpha$.
By definition of the veristratifier, $\alpha=\omega\cdot n$
for some $0<n\in\N$, and $\onset(\psi^+)\subseteq\omega\cdot n$.
By Lemma \ref{uniformitylemma}, $T^\oplus$ is uniform,
so we can use the collapse theorem (Theorem \ref{collapsethm}).
The following are equivalent.
\begin{align*}
\mathscr N_T &\models K\psi[s]\\
T &\models \psi^s
  &\mbox{(Definition \ref{defnofintendedmodel})}\\
T^\oplus &\models (\psi^s)^+
  &\mbox{(Upward proof stratification---Theorem \ref{proofstrat})}\\
T^\oplus\cap(\omega\cdot n) &\models (\psi^s)^+
  &\mbox{(The collapse theorem---Theorem \ref{collapsethm})}\\
T^\oplus\cap(\omega\cdot n) &\models (\psi^+)^s
  &\mbox{(Clearly $(\psi^s)^+\equiv(\psi^+)^s$)}\\
\M_{T^\oplus} &\models K^{\omega\cdot n}\psi^+[s].
  &\mbox{(Definition \ref{stratifiedmodel})} 
\end{align*}
\end{proof}

\begin{corollary}
\label{revelatorycorollary}
For any $\LEA$-theory $T$,
in order to show $\mathscr N_T\models T$, it suffices
to show $\M_{T^\oplus}\models T^\oplus$.
\end{corollary}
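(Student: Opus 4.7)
The plan is to derive the corollary as a direct consequence of the upward stratification theorem (Theorem \ref{upwardstratificationtheorem}), specialized to sentences.

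First I would fix the veristratifier $\bullet^+$ (the stratifier given by $X=\{\omega\cdot 1,\omega\cdot 2,\ldots\}$) and let $\phi$ be an arbitrary element of $T$. Since a theory is by convention a set of sentences (Definition \ref{standarddefns}), $\phi$ has no free variables, so the choice of assignment is immaterial; pick any assignment $s$.

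Next I would observe that $\phi^+ \in T^\oplus$ directly from Definition \ref{oplusdefn}, because the veristratifier is a stratifier. Therefore the hypothesis $\M_{T^\oplus}\models T^\oplus$ gives $\M_{T^\oplus}\models\phi^+$, and in particular $\M_{T^\oplus}\models\phi^+[s]$. Applying Theorem \ref{upwardstratificationtheorem} in the reverse direction then yields $\mathscr N_T\models\phi[s]$, and hence $\mathscr N_T\models\phi$. Since $\phi\in T$ was arbitrary, $\mathscr N_T\models T$.

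There is essentially no obstacle here: the corollary is just the sentence-level contrapositive-free reading of the upward stratification theorem once one notices that every element of $T$, run through the veristratifier, lands in $T^\oplus$. The only care needed is to keep straight that $\phi$ is a sentence (so the quantification over assignments in Theorem \ref{upwardstratificationtheorem} is vacuous) and that the veristratifier is a legitimate stratifier so that its image of $T$ sits inside $T^\oplus$.
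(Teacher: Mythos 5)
Your proposal is correct and matches the paper's (implicit) argument exactly: the corollary is left as an immediate consequence of Theorem \ref{upwardstratificationtheorem}, obtained precisely by noting that the veristratifier image of each sentence of $T$ lies in $T^\oplus$ and applying the theorem's right-to-left direction. No differences worth noting.
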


Corollary \ref{revelatorycorollary} provides a foothold 
for proving truth of self-referential theories
by transfinite induction up to $\omega\cdot\omega$:
in order to prove $\mathscr N_{T}\models T$,
one can attempt to prove $\M_{T^\oplus}\models T^\oplus\cap\alpha$
for all $\alpha\in\epom$ by induction on $\alpha$.

\section{Upward Generic Axioms}
\label{genericaxiomssectn}

One way to state an epistemological consistency result, for example
that a truthful machine can know itself to be true and recursively enumerable, is to show that
the schemas in question are consistent with a particular background theory of knowledge.
We take a more general approach:
show that the doubted schemas are consistent with \emph{any} background theory satisfying certain
conditions.
%
%

We say that an $\LEA$-theory $T$ is \emph{$K$-closed} if $K\phi\in T$ whenever $\phi\in T$.

\begin{definition}
\label{baregenericdefn}
Suppose $T_0$ is an $\LEA$-theory.
\begin{enumerate}
\item $T_0$ is \emph{generic} if $\mathscr N_T\models T_0$ for every $\LEA$-theory $T\supseteq T_0$.
\item $T_0$ is \emph{closed-generic} if $T_0$ is $K$-closed and $\mathscr N_T\models T_0$ for every $K$-closed $\LEA$-theory $T\supseteq T_0$.
\item $T_0$ is \emph{r.e.-generic} if $T_0$ is r.e.~and $\mathscr N_T\models T_0$ for every r.e.~$\LEA$-theory $T\supseteq T_0$.
\item $T_0$ is \emph{closed-r.e.-generic} if $T_0$ is $K$-closed, r.e., and $\mathscr N_T\models T_0$ for every $K$-closed r.e.~$\LEA$-theory
$T\supseteq T_0$.
\end{enumerate}
\end{definition}

\begin{lemma}
\label{genericdiamond}
\item
\begin{enumerate}
\item Generic$+$r.e.~implies r.e.-generic.
\item Generic$+K$-closed implies closed-generic.
\item Closed-generic$+$r.e.~implies closed-r.e.-generic.
\item R.e.-generic$+K$-closed implies closed-r.e.-generic.
\end{enumerate}
\end{lemma}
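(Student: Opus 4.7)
The plan is to observe that all four implications are immediate bookkeeping consequences of Definition \ref{baregenericdefn}, because each genericity notion differs from its neighbors only by the syntactic side-condition imposed on $T_0$ itself and by the class of extensions $T \supseteq T_0$ quantified over. In every case, the hypothesis produces $\mathscr N_T \models T_0$ for a class of $T$ that is at least as large as the class needed in the conclusion, and the extra syntactic conditions ($K$-closed, r.e., or both) are supplied by the auxiliary hypothesis.

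For (1), I would fix a generic and r.e.~$T_0$, let $T \supseteq T_0$ be any r.e.~$\LEA$-theory, and note that genericity applies to this $T$ (since the class of r.e.~extensions is a subclass of the class of all extensions), giving $\mathscr N_T \models T_0$; combined with $T_0$ being r.e., this is exactly r.e.-genericity. The argument for (2) is identical with ``r.e.'' replaced by ``$K$-closed'', so I would just write it in parallel. For (3), I would fix a closed-generic r.e.~$T_0$ and a $K$-closed r.e.~extension $T \supseteq T_0$; since $T$ is in particular $K$-closed, closed-genericity gives $\mathscr N_T \models T_0$, and with $T_0$ being $K$-closed and r.e.~we get closed-r.e.-genericity. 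For (4), I would fix an r.e.-generic $K$-closed $T_0$ and a $K$-closed r.e.~extension $T \supseteq T_0$; here $T$ is in particular r.e., so r.e.-genericity gives $\mathscr N_T \models T_0$, and again combining with the $K$-closure of $T_0$ yields the conclusion.

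There is no real obstacle: each part is a one-line inclusion-of-quantifier-class argument. The only thing to be careful about is making sure, in each part, that one cites the correct additional assumption ($K$-closedness versus r.e.) needed to meet the syntactic side-condition on $T_0$ in the conclusion, so that the verification is not merely about the semantic clause $\mathscr N_T \models T_0$ but also about the presence of the required closure property on $T_0$ itself.
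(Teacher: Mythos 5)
Your proof is correct and is exactly the intended argument: the paper itself dismisses this lemma as ``Straightforward,'' and your quantifier-class inclusion plus bookkeeping of the syntactic side-conditions on $T_0$ is the only thing there is to check.
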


\begin{proof}
Straightforward.
\end{proof}

\begin{lemma}
\label{genericunions}
Let $T=\cup_{i\in I} T_i$ where each $T_i$ is an $\LEA$-theory.
\begin{enumerate}
\item If the $T_i$ are generic, then $T$ is generic.
\item If the $T_i$ are closed-generic, then $T$ is closed-generic.
\item If the $T_i$ are r.e.-generic and $T$ is r.e., then $T$ is r.e.-generic.
\item If the $T_i$ are closed-r.e.-generic and $T$ is r.e., then $T$ is closed-r.e.-generic.
\end{enumerate}
\end{lemma}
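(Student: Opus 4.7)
The proof is really just definition-chasing; the plan is to unwind the four clauses of Definition~\ref{baregenericdefn} and observe that any extension of $T=\bigcup_i T_i$ automatically extends each $T_i$.

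First I would dispose of (1). Suppose every $T_i$ is generic and let $T'$ be any $\LEA$-theory with $T'\supseteq T$. Then $T'\supseteq T_i$ for each $i\in I$, so by genericity of $T_i$ we have $\mathscr N_{T'}\models T_i$. Taking unions gives $\mathscr N_{T'}\models\bigcup_i T_i = T$, which is exactly what ``$T$ is generic'' asserts.

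Next I would handle (2). The only extra content beyond (1) is that I must verify $T$ is itself $K$-closed. This is immediate: if $\phi\in T$ then $\phi\in T_i$ for some $i$, and since $T_i$ is $K$-closed $K\phi\in T_i\subseteq T$. The rest of the argument is exactly (1) restricted to $K$-closed extensions $T'\supseteq T$; such a $T'$ is also a $K$-closed extension of each $T_i$, so closed-genericity of $T_i$ yields $\mathscr N_{T'}\models T_i$ and hence $\mathscr N_{T'}\models T$. For (3), the hypothesis gives us that $T$ is r.e.~for free, and the argument is again identical: any r.e.~$T'\supseteq T$ is an r.e.~extension of each $T_i$, so r.e.-genericity yields $\mathscr N_{T'}\models T_i$, hence $\mathscr N_{T'}\models T$. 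For (4), one combines the observations from (2) and (3): $T$ is $K$-closed by the union argument, $T$ is r.e.~by hypothesis, and any $K$-closed r.e.~extension of $T$ is a $K$-closed r.e.~extension of every $T_i$.

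There is no real obstacle here; the lemma is essentially a restatement of the fact that the defining property ``$\mathscr N_{T'}\models T_0$ for every $T'\supseteq T_0$ satisfying side condition $P$'' is preserved under arbitrary unions, provided the side condition $P$ ($K$-closedness, r.e.-ness, or their conjunction) is preserved under union or assumed outright. The one tiny observation worth making explicit, and the only place any verification is needed at all, is that a union of $K$-closed theories is $K$-closed.
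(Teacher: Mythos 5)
Your proof is correct and is exactly the definition-chasing argument the paper has in mind (the paper's own proof is just the word ``Straightforward''). The one detail you rightly single out---that a union of $K$-closed theories is $K$-closed---is indeed the only point requiring any verification.
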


\begin{proof}
Straightforward.
\end{proof}

\begin{lemma}
\label{etwoisgeneric}
The $\LEA$-schema $E_2$, consisting of $\ucl{K(\phi\rightarrow\psi)\rightarrow K\phi\rightarrow K\psi}$, is generic.
\end{lemma}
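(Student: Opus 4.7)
The plan is to unfold the definition of $\mathscr N_T$ and reduce the satisfaction of the distribution schema to a simple modus-ponens step at the level of $\models$. Fix any $\LEA$-theory $T\supseteq E_2$ and fix any instance $\ucl{K(\phi\rightarrow\psi)\rightarrow K\phi\rightarrow K\psi}$ of $E_2$. Using the standard fact recorded after Definition \ref{standarddefns}, it suffices to let $s$ be an arbitrary assignment and show
\[
\mathscr N_T\models\bigl(K(\phi\rightarrow\psi)\rightarrow K\phi\rightarrow K\psi\bigr)[s].
\]

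Assume the two antecedents hold under $s$, that is, $\mathscr N_T\models K(\phi\rightarrow\psi)[s]$ and $\mathscr N_T\models K\phi[s]$. By Definition \ref{defnofintendedmodel} these are equivalent to $T\models(\phi\rightarrow\psi)^s$ and $T\models\phi^s$. The key syntactic observation is that substitution of numerals for free variables commutes with the propositional connectives, so $(\phi\rightarrow\psi)^s\equiv\phi^s\rightarrow\psi^s$. Thus $T$ semantically entails both $\phi^s\rightarrow\psi^s$ and $\phi^s$, and applying modus ponens at the meta-level (in any model of $T$) gives $T\models\psi^s$. One more application of Definition \ref{defnofintendedmodel} in the reverse direction yields $\mathscr N_T\models K\psi[s]$, which is exactly what was needed.

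There is really no obstacle here: the only thing to verify carefully is the purely syntactic identity $(\phi\rightarrow\psi)^s\equiv\phi^s\rightarrow\psi^s$, which is immediate from the inductive definition of the superscript-$s$ operation, and the observation that genericity only requires the conclusion for theories $T$ that extend $E_2$ (although in this case the proof never actually invokes $E_2\subseteq T$, which simply reflects that the intended model $\mathscr N_T$ makes $K$ behave like a provability/entailment predicate and so automatically validates distribution).
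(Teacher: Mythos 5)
Your proof is correct and follows essentially the same route as the paper's: unfold Definition \ref{defnofintendedmodel}, observe that $(\phi\rightarrow\psi)^s\equiv\phi^s\rightarrow\psi^s$, apply modus ponens at the level of $\models$, and fold back up. Your closing remark that the hypothesis $T\supseteq E_2$ is never actually used is accurate and consistent with the paper's argument, which likewise establishes $\mathscr N_T\models E_2$ for arbitrary $T$.
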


\begin{proof}
Suppose $T\supseteq E_2$ is arbitrary.
For any $\LEA$-formulas $\phi$ and $\psi$ and assignment $s$, if
$\mathscr N_T\models K(\phi\rightarrow\psi)[s]$ and $\mathscr N_T\models K\phi[s]$,
then
\begin{align*}
T &\models \phi^s\rightarrow\psi^s
  &\mbox{(Definition \ref{defnofintendedmodel})}\\
T &\models \phi^s
  &\mbox{(Definition \ref{defnofintendedmodel})}\\
T &\models \psi^s
  &\mbox{(Modus Ponens)}\\
\mathscr N_T &\models K\psi[s],\mbox{ as desired.}
  &\mbox{(Definition \ref{defnofintendedmodel})} 
\end{align*}
\end{proof}

\begin{definition}
Suppose $T_0$ is an $\LEA$-theory.
\begin{enumerate}
\item $T_0$ is \emph{upgeneric} if $\M_{T^\oplus}\models T^\oplus_0$ for every $\LEA$-theory $T\supseteq T_0$.
\item $T_0$ is \emph{closed-upgeneric} if $T_0$ is $K$-closed and $\M_{T^\oplus}\models T^\oplus_0$ for every $K$-closed $\LEA$-theory $T\supseteq
T_0$.
\item $T_0$ is \emph{r.e.-upgeneric} if $T_0$ is r.e.~and $\M_{T^\oplus}\models T^\oplus_0$ for every r.e.~$\LEA$-theory $T\supseteq T_0$.
\item $T_0$ is \emph{closed-r.e.-upgeneric} if $T_0$ is
$K$-closed, r.e., and $\M_{T^\oplus}\models T^\oplus_0$ for every $K$-closed r.e.~$\LEA$-theory $T\supseteq T_0$.
\end{enumerate}
\end{definition}

\begin{lemma}
(Compare Lemma \ref{genericdiamond})
\begin{enumerate}
\item Upgeneric$+K$-closed implies closed-generic.
\item Upgeneric$+$r.e.~implies r.e.-upgeneric.
\item Closed-upgeneric$+$r.e.~implies closed-r.e.-upgeneric.
\item R.e.-upgeneric$+K$-closed implies closed-r.e.-upgeneric.
\end{enumerate}
\end{lemma}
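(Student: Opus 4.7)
The proof plan splits naturally along the four parts.

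Parts 2, 3, and 4 are direct restriction arguments, in perfect analogy with the corresponding parts of Lemma \ref{genericdiamond}. In each case the hypothesis quantifies ``$\M_{T^\oplus}\models T_0^\oplus$ for every $T\supseteq T_0$ (in some class)'' and the conclusion asks the same for a \emph{smaller} class (r.e., or $K$-closed, or both). The extra r.e.~or $K$-closed assumption on $T_0$ is just carried over verbatim into the conclusion. I would simply note that each of these is immediate from the definitions and cite ``Straightforward,'' exactly as in Lemma \ref{genericdiamond}.

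Part 1 is the only substantive point. Assume $T_0$ is upgeneric and $K$-closed; I need to show $T_0$ is closed-generic. Since $T_0$ is already $K$-closed by hypothesis, it suffices to show that $\mathscr N_T\models T_0$ for every $K$-closed $\LEA$-theory $T\supseteq T_0$ (in fact the argument will work for every $T\supseteq T_0$, so upgeneric implies generic, and then Lemma \ref{genericdiamond}(2) finishes the job).

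So fix such a $T$, let $\phi\in T_0$ (necessarily a sentence), and let $\bullet^+$ be the veristratifier. By upgenericity, $\M_{T^\oplus}\models T_0^\oplus$, and in particular $\M_{T^\oplus}\models\phi^+$. The upward stratification theorem (Theorem \ref{upwardstratificationtheorem}), applied with the veristratifier to the sentence $\phi$, says exactly that $\mathscr N_T\models\phi$ if and only if $\M_{T^\oplus}\models\phi^+$. Hence $\mathscr N_T\models\phi$. Since $\phi\in T_0$ was arbitrary, $\mathscr N_T\models T_0$, as required.

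There is really no obstacle here beyond recognizing that upward stratification (Theorem \ref{upwardstratificationtheorem}) is precisely the bridge from the stratified world, where upgenericity lives, to the unstratified world, where (closed-)genericity lives; the roles of $K$-closedness and r.e.-ness in parts 2--4 are purely bookkeeping, matching the class of $T$'s that the conclusion quantifies over to what the hypothesis supplies.
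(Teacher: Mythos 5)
Your proposal is correct. Parts 2--4 are, as you say, pure bookkeeping (restricting the class of theories $T$ quantified over), and the paper's entire proof of this lemma is the single word ``Straightforward,'' so there is nothing to compare there. The one point worth dwelling on is part 1, and you have read it exactly right: as literally printed the conclusion is \emph{closed-generic}, not \emph{closed-upgeneric}, so it crosses from the stratified world ($\M_{T^\oplus}$) back to the unstratified one ($\mathscr N_T$) and cannot be a mere restriction argument. Your bridge---apply Theorem \ref{upwardstratificationtheorem} with the veristratifier to each sentence $\phi\in T_0$, using $\M_{T^\oplus}\models\phi^+$ from upgenericity to conclude $\mathscr N_T\models\phi$---is precisely the argument the paper itself deploys a few lemmas later to prove Lemma \ref{upgenericimpliesgeneric} (``upgeneric implies generic,'' etc.), so your part 1 amounts to that lemma combined with Lemma \ref{genericdiamond}(2). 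Given the explicit parallel drawn with Lemma \ref{genericdiamond}, whose part 2 reads ``generic $+$ $K$-closed implies closed-generic,'' the intended statement of part 1 here was almost certainly ``closed-\emph{up}generic,'' in which case it too is a trivial restriction; but your argument proves the statement as printed, which is the safer reading, and it is sound either way.
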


\begin{proof}
Straightforward.
\end{proof}

\begin{lemma}
\label{upgenericunions}
Suppose $T=\cup_{i\in I}T_i$ where the $T_i$ are $\LEA$-theories.
\begin{enumerate}
\item If the $T_i$ are upgeneric, then $T$ is upgeneric.
\item If the $T_i$ are closed-upgeneric, then $T$ is closed-upgeneric.
\item If the $T_i$ are r.e.-upgeneric and $T$ is r.e., then $T$ is 
r.e.-upgeneric.
\item If the $T_i$ are closed-r.e.-upgeneric and $T$ is r.e., then $T$ is 
closed-r.e.-upgeneric.
\end{enumerate}
\end{lemma}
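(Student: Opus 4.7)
The plan is to imitate the proof of Lemma \ref{genericunions} verbatim, driven by one bookkeeping observation: $\oplus$ commutes with unions, i.e., if $T=\cup_{i\in I}T_i$ then $T^\oplus = \cup_{i\in I}T^\oplus_i$. This is immediate from Definition \ref{oplusdefn}, since $\phi^+\in T^\oplus$ means precisely that $\phi\in T_j$ for some $j\in I$ and $\bullet^+$ is a stratifier, which is exactly the condition for $\phi^+$ to lie in $T^\oplus_j$.

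For part (1), I would let $T'\supseteq T$ be an arbitrary $\LEA$-theory. Since $T'\supseteq T_i$ for every $i$, upgenericness of each $T_i$ gives $\M_{T'^\oplus}\models T^\oplus_i$. Taking the union over $i$ and applying the commutation observation above yields $\M_{T'^\oplus}\models T^\oplus$, which is exactly what upgenericness of $T$ demands.

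Parts (2)--(4) follow the same template, restricting the quantifier over $T'$ to $K$-closed, r.e., or $K$-closed r.e.~theories, respectively. For (2), I would first note that $T$ is itself $K$-closed (since a union of $K$-closed theories is $K$-closed), so $T$ qualifies as a ``$T_0$'' in the definition of closed-upgeneric; every $K$-closed $T'\supseteq T$ then contains each $T_i$ and the argument of (1) goes through unchanged. For (3) and (4), the additional hypothesis that $T$ is r.e.~is exactly what is needed for $T$ itself to satisfy the r.e.~requirement baked into the definitions; the inclusion-chasing is otherwise identical. I anticipate no real obstacle; this lemma is a routine transcription of Lemma \ref{genericunions} whose only new ingredient is that $\oplus$ distributes over unions.
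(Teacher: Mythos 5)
Your proposal is correct and matches the paper's intent exactly: the paper dismisses this lemma with ``Straightforward,'' and your argument---that $\oplus$ distributes over unions by Definition \ref{oplusdefn}, so each superset $T'$ of $T$ contains every $T_i$ and the conclusion follows by taking the union of the models' satisfactions---is precisely that straightforward argument, with the $K$-closure and r.e.~bookkeeping for parts (2)--(4) handled correctly.
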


\begin{proof}
Straightforward.
\end{proof}

\begin{lemma}
\label{upgenericimpliesgeneric}
\item
\begin{enumerate}
\item Upgeneric implies generic.
\item Closed-upgeneric implies closed-generic.
\item R.e.-upgeneric implies r.e.-generic.
\item Closed-r.e.-upgeneric implies closed-r.e.-generic.
\end{enumerate}
\end{lemma}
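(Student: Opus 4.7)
The four parts of this lemma all follow the same template, and the key tool is Theorem \ref{upwardstratificationtheorem} (equivalently, Corollary \ref{revelatorycorollary}). The plan is to show that for any $\LEA$-theory $T\supseteq T_0$, the statement $\M_{T^\oplus}\models T_0^\oplus$ implies $\mathscr N_T\models T_0$, and then to observe that the side conditions ($K$-closed, r.e., or both) pass through unchanged since we are quantifying over exactly the same class of $T$'s in the two definitions.

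For part (1), I would fix an upgeneric $T_0$ and an arbitrary $\LEA$-theory $T\supseteq T_0$. By upgenericity, $\M_{T^\oplus}\models T_0^\oplus$. Let $\bullet^+$ be the veristratifier. For any (sentence) $\phi\in T_0$, the formula $\phi^+$ belongs to $T_0^\oplus$ by Definition \ref{oplusdefn}, so $\M_{T^\oplus}\models\phi^+$. Theorem \ref{upwardstratificationtheorem} then yields $\mathscr N_T\models\phi$. As $\phi$ was arbitrary, $\mathscr N_T\models T_0$, so $T_0$ is generic.

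Parts (2)--(4) are the same argument with the quantifier over $T$ restricted: in (2), $T_0$ is already $K$-closed and we take $T\supseteq T_0$ to be $K$-closed; in (3), $T_0$ is already r.e.~and we take $T\supseteq T_0$ to be r.e.; in (4) we assume both. In each case the hypothesis of upgenericity (for its flavor) supplies $\M_{T^\oplus}\models T_0^\oplus$, and the veristratifier argument above converts this into $\mathscr N_T\models T_0$. No new ideas are needed.

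There is essentially no main obstacle: once Theorem \ref{upwardstratificationtheorem} is in hand, the lemma is a bookkeeping consequence of the observation that applying the veristratifier to each element of $T_0$ embeds $T_0$ (via $\bullet^+$) into $T_0^\oplus$. The only mild subtlety is making sure one chooses the veristratifier (not an arbitrary stratifier) so that Theorem \ref{upwardstratificationtheorem} applies directly; any other stratifier would require an additional appeal to Lemma \ref{structuregrowingmagic} or its analogue, which is unnecessary here.
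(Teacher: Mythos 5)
Your proposal is correct and is exactly the argument the paper intends: its entire proof is the citation ``By the upward stratification theorem (Theorem \ref{upwardstratificationtheorem})'', and your spelled-out version---apply the veristratifier to each $\phi\in T_0$, note $\phi^+\in T_0^\oplus$, and transfer $\M_{T^\oplus}\models\phi^+$ back to $\mathscr N_T\models\phi$---is the standard way to unpack that citation, with the side conditions on $T$ passing through unchanged as you say.
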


\begin{proof}
By the upward stratification theorem (Theorem \ref{upwardstratificationtheorem}).
\end{proof}

In light of Lemmas \ref{etwoisgeneric} and \ref{upgenericimpliesgeneric},
the following shows that upgeneric is strictly
stronger than generic.

\begin{lemma}
\label{etwonotupgeneric}
$E_2$ is not upgeneric.  In fact $E_2$ is not even closed-r.e.-upgeneric.
\end{lemma}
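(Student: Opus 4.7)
The plan is to explicitly construct a $K$-closed r.e.~theory $T\supseteq E_2$ and exhibit a single instance of $E_2^\oplus$ that fails in $\M_{T^\oplus}$. The key observation is that whenever $\depth(\phi)>\depth(\psi)$, Lemma \ref{depthandstratifier} forces every stratifier to send $K\phi$ to $K^\beta\phi^+$ and $K\psi$ to $K^\gamma\psi^+$ with $\gamma<\beta$, so a stratified $E_2$-instance demands modus ponens \emph{across levels}: consequences available in $T^\oplus\cap\beta$ must survive at the strictly smaller $T^\oplus\cap\gamma$. Since $E_2$ alone provides neither factivity nor any level-lowering principle, one can engineer a $T$ in which this transfer breaks down.

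Concretely, let $T$ be the smallest $K$-closed $\LEA$-theory containing $E_2\cup\{KK(0=1),\,KK(0=1)\rightarrow K(0=1)\}$. Then $T$ is plainly r.e., $K$-closed, and extends $E_2$. Apply $E_2$ at $\phi\equiv KK(0=1)$ and $\psi\equiv K(0=1)$ and stratify with the stratifier given by $X=\omega$, obtaining the $E_2^\oplus$-instance
\[
K^2\bigl(K^1K^0(0=1)\rightarrow K^0(0=1)\bigr)\rightarrow K^2K^1K^0(0=1)\rightarrow K^1K^0(0=1).
\]
Both premises hold in $\M_{T^\oplus}$ because $K^1K^0(0=1)\rightarrow K^0(0=1)$ and $K^1K^0(0=1)$ are precisely the $X=\omega$-stratifications of the two added generators; each has onset $\{0,1\}$ and so lies in $T^\oplus\cap 2$. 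The conclusion holds iff $T^\oplus\cap 1\models K^0(0=1)$, which I would refute by constructing an $\Lom$-structure with standard first-order part that interprets $K^0$ as identically false.

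The only step requiring care is cataloguing $T^\oplus\cap 1$: any $\phi^+\in T^\oplus\cap 1$ satisfies $\onset(\phi^+)\subseteq\{0\}$, which by Lemma \ref{depthandstratifier} forces $\depth(\phi)\leq 1$. No depth-$0$ sentence was ever added to $T$, and the only depth-$1$ members of $T$ are the $E_2$-instances $\ucl{K(\theta\rightarrow\theta')\rightarrow K\theta\rightarrow K\theta'}$ with $\theta,\theta'$ in $\LPA$; every other generator, and every $K$-iterate of every generator, has depth at least $2$. Thus $T^\oplus\cap 1$ consists of universal closures of formulas $K^0(\theta\rightarrow\theta')\rightarrow K^0\theta\rightarrow K^0\theta'$, all vacuously true when $K^0$ is false. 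Hence $T^\oplus\cap 1\not\models K^0(0=1)$, the displayed instance of $E_2^\oplus$ fails in $\M_{T^\oplus}$, and since $T$ is a $K$-closed r.e.~extension of $E_2$ (equivalently, of its $K$-closure), this simultaneously witnesses the failure of upgenericity and of closed-r.e.-upgenericity. The main obstacle is simply the bookkeeping of verifying that no unintended depth-$\leq 1$ formulas sneak into $T$ through the $K$-closure.
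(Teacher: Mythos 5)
Your proof is correct and takes essentially the same approach as the paper: the paper's counterexample uses the generators $K(1=0)$ and $K(1=0)\rightarrow(1=0)$ with the failing instance $K(K(1=0)\rightarrow(1=0))\rightarrow KK(1=0)\rightarrow K(1=0)$, i.e., your construction shifted down by one level of $K$, so that the conclusion lands in $T^\oplus\cap 0=\emptyset$ rather than in $T^\oplus\cap 1$. You have merely filled in the verification the paper leaves to the reader, at the cost of slightly more bookkeeping at level $1$.
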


\begin{proof}
Let $T$ be the smallest $K$-closed $\LEA$-theory containing the following schemata.
\begin{enumerate}
\item $E_2$.
\item $K(1=0)$.
\item $K(1=0)\rightarrow (1=0)$.
\end{enumerate}
Since $T\supseteq E_2$ is closed r.e., it suffices to exhibit
some $\theta\in E_2$ and stratifier $\bullet^+$ such that $\M_{T^\oplus}\not\models \theta^+$.
If $\bullet^+$ is the stratifier given by $X=\{0,1,2,\ldots\}$,
%
%
the reader can check that
\[
\theta \,\equiv\,\,\, K(K(1=0)\rightarrow (1=0))\rightarrow KK(1=0)\rightarrow K(1=0)
\]
works.
\end{proof}

Lemma \ref{etwonotupgeneric} and
the following demystify our reason for weakening $E_2$ to $E'_2$.

\begin{lemma}
\label{etwoprimeisupgeneric}
The schema $E'_2$, consisting of $\ucl{K(\phi\rightarrow\psi)\rightarrow K\phi\rightarrow K\psi}$
whenever $\depth(\phi)\leq\depth(\psi)$ (Definition \ref{depthdefn}), is upgeneric.
\end{lemma}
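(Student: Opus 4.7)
The plan is to show that for an arbitrary $\LEA$-theory $T \supseteq E'_2$, an arbitrary instance $\theta \equiv \ucl{K(\phi \rightarrow \psi) \rightarrow K\phi \rightarrow K\psi}$ with $\depth(\phi) \leq \depth(\psi)$, and an arbitrary stratifier $\bullet^+$, we have $\M_{T^\oplus} \models \theta^+$. Writing $\theta^+$ explicitly, it is a universal closure of $K^\gamma(\phi^+ \rightarrow \psi^+) \rightarrow K^\alpha \phi^+ \rightarrow K^\beta \psi^+$, where $\alpha$, $\beta$, $\gamma$ are the outermost ordinals that $\bullet^+$ attaches to $K\phi$, $K\psi$, and $K(\phi \rightarrow \psi)$ respectively.

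The crucial preliminary step is to notice that the hypothesis $\depth(\phi) \leq \depth(\psi)$ forces these ordinals to line up. Since $\depth(\phi \rightarrow \psi) = \max\{\depth(\phi), \depth(\psi)\} = \depth(\psi)$, Lemma \ref{depthandstratifier} applied twice---once to $\phi, \psi$ and once to $\phi \rightarrow \psi, \psi$---yields both $\alpha \leq \beta$ and $\gamma = \beta$.

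Given this alignment, the rest is routine. Fix an assignment $s$ and assume both $\M_{T^\oplus} \models K^\gamma(\phi^+ \rightarrow \psi^+)[s]$ and $\M_{T^\oplus} \models K^\alpha \phi^+[s]$; by Definition \ref{stratifiedmodel} (together with the easily verified fact that $(\bullet^+)^s$ and $(\bullet^s)^+$ coincide), these translate to $T^\oplus \cap \gamma \models (\phi^+)^s \rightarrow (\psi^+)^s$ and $T^\oplus \cap \alpha \models (\phi^+)^s$. Because $\gamma = \beta$ and $\alpha \leq \beta$, both entailments hold over the larger theory $T^\oplus \cap \beta$ as well, and semantic modus ponens produces $T^\oplus \cap \beta \models (\psi^+)^s$, i.e., $\M_{T^\oplus} \models K^\beta \psi^+[s]$, as required.

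The main obstacle---and the whole reason for the depth restriction defining $E'_2$---is the identity $\gamma = \beta$: without $\depth(\phi) \leq \depth(\psi)$ one could have $\gamma > \beta$, in which case the implication would live only in the strictly larger fragment $T^\oplus \cap \gamma$ rather than the fragment $T^\oplus \cap \beta$ in which we need to execute modus ponens. This explains precisely why the unrestricted schema $E_2$ fails to be upgeneric (Lemma \ref{etwonotupgeneric}) while the depth-restricted $E'_2$ succeeds.
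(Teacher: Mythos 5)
Your proof is correct and follows essentially the same route as the paper's: extract the outer ordinals $\alpha,\beta,\gamma$, use Lemma \ref{depthandstratifier} to get $\alpha\leq\beta=\gamma$, and then run modus ponens inside the single fragment $T^\oplus\cap\beta$ via Definition \ref{stratifiedmodel}. The closing remark correctly identifies why the depth restriction is needed, matching the paper's discussion around Lemma \ref{etwonotupgeneric}.
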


\begin{proof}
Let $T\supseteq E'_2$ be arbitrary.
Suppose $\phi$ and $\psi$ are $\LEA$-formulas with $\depth(\phi)\leq\depth(\psi)$
and $\bullet^+$ is a stratifier,
say with
\begin{align*}
(K\phi)^+ &\equiv K^\alpha\phi^+\\
(K\psi)^+ &\equiv K^\beta\psi^+\\
(K(\phi\rightarrow \psi))^+ &\equiv K^\gamma(\phi^+\rightarrow \psi^+),
\end{align*}
we will show $\M_{T^\oplus}$ satisfies
\[
(\ucl{K(\phi\rightarrow\psi)\rightarrow K\phi\rightarrow K\psi})^+
\equiv
\ucl{K^\gamma(\phi^+\rightarrow\psi^+)\rightarrow 
K^\alpha\phi^+\rightarrow 
K^\beta\psi^+}.
\]
Note that by Lemma \ref{depthandstratifier}, $\alpha\leq\beta=\gamma$.
Let $s$ be an arbitrary assignment such that
$\M_{T^\oplus}\models K^\gamma(\phi^+\rightarrow\psi^+)[s]$ and $\M_{T^\oplus}\models K^\alpha\phi^+[s]$.
Then
\begin{align*}
T^\oplus\cap\gamma &\models (\phi^+)^s \rightarrow (\psi^+)^s
  &\mbox{(Definition \ref{stratifiedmodel})}\\
T^\oplus\cap\alpha &\models (\phi^+)^s
  &\mbox{(Definition \ref{stratifiedmodel})}\\
T^\oplus\cap\beta &\models ((\phi^+)^s\rightarrow (\psi^+)^s)\,\wedge\, (\phi^+)^s
  &\mbox{(Since $\alpha\leq\beta=\gamma$)}\\
T^\oplus\cap\beta &\models (\psi^+)^s
  &\mbox{(Modus Ponens)}\\
\M_{T^\oplus} &\models K^\beta \psi^+[s],\mbox{ as desired.}
  &\mbox{(Definition \ref{stratifiedmodel})} 
\end{align*}
\end{proof}

\begin{lemma}
\label{assignedvalidityisupgeneric}
The Assigned Validity schema, consisting of $\phi^s$ whenever $\phi$ is valid and $s$ is any assignment,
is upgeneric.
\end{lemma}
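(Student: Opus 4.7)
The plan is to unpack upgenericity and then push $(\phi^s)^+$ through the stratifier so that validity of $\phi$ (lifted to validity of $\phi^+$ by Theorem \ref{stratifiersrespectvalidity}) can be applied inside the $\Lom$-structure $\M_{T^\oplus}$. Fix an arbitrary $\LEA$-theory $T$ containing Assigned Validity, a valid $\LEA$-formula $\phi$, an assignment $s$, and a stratifier $\bullet^+$; the goal is to show $\M_{T^\oplus}\models (\phi^s)^+$.

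First I would observe the identity $(\phi^s)^+\equiv(\phi^+)^s$, which is already invoked parenthetically in the last display of the proof of Theorem \ref{upwardstratificationtheorem}. It follows by a straightforward induction on $\phi$: substituting numerals for free variables does not alter the operator skeleton of $\phi$, and in the only interesting case $\phi\equiv K\phi_0$ the stratifier's choice of ordinal depends solely on $\onset(\phi_0^+)$, which is unchanged by such a substitution; hence the same superscript is picked whether the substitution happens before or after stratifying.

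Second, since $\phi$ is valid, part 2 of Theorem \ref{stratifiersrespectvalidity} gives that $\phi^+$ is a valid $\Lom$-formula, so in particular $\M_{T^\oplus}\models\phi^+[s]$. Now Lemma \ref{scriptmbehavesasintended}, applied to the $\Lom$-theory $T^\oplus$ and the $\Lom$-formula $\phi^+$, yields $\M_{T^\oplus}\models(\phi^+)^s$, which by the identity of step one is exactly $\M_{T^\oplus}\models(\phi^s)^+$, as required.

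The only real subtlety (more a pitfall than an obstacle) is to resist the tempting shortcut of arguing that $\phi^s$ is itself valid in $\LEA$, which would make the result trivial: as the paper emphasizes, numeral-for-variable substitution need not preserve validity in the base logic (this is precisely why Assigned Validity is isolated as a separate axiom, rather than being absorbed into $E_1$). The above plan circumvents this by performing the numeral substitution inside the specific standard-universe structure $\M_{T^\oplus}$ via Lemma \ref{scriptmbehavesasintended}, rather than at the level of validity, and by transporting the validity across the stratifier instead of across the substitution.
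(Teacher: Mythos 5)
Your proof is correct and follows essentially the same route as the paper's: lift validity of $\phi$ to validity of $\phi^+$ via Theorem \ref{stratifiersrespectvalidity}, conclude $\M_{T^\oplus}\models\phi^+[s]$, and convert to $\M_{T^\oplus}\models(\phi^+)^s$ by Lemma \ref{scriptmbehavesasintended}. Your explicit appeal to the identity $(\phi^s)^+\equiv(\phi^+)^s$ (which the paper leaves implicit here, citing it only later) and your closing remark about why one cannot shortcut through validity of $\phi^s$ are both accurate and welcome additions.
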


\begin{proof}
Let $T\supseteq\mbox{(Assigned Validity)}$ be arbitrary.
Suppose $\phi$ is valid, $s$ is an assignment, and $\bullet^+$ is a stratifier.
By Theorem \ref{stratifiersrespectvalidity}, $\phi^+$ is also valid.
Thus $\M_{T^\oplus}\models\phi^+[s]$, and by Lemma \ref{scriptmbehavesasintended},
$\M_{T^\oplus}\models (\phi^+)^s$.
By arbitrariness of $\phi$, $s$, and $\bullet^+$, $\M_{T^\oplus}\models \mbox{(Assigned Validity)}^\oplus$.
\end{proof}

\begin{lemma}
\label{trivialgenericlemma}
Any set of true purely arithmetical sentences is upgeneric.
\end{lemma}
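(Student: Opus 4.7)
The plan is to unwind the definitions and observe that everything is essentially trivial once we note that stratifiers act as the identity on purely arithmetical formulas.

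First I would fix an arbitrary $\LEA$-theory $T\supseteq T_0$ and show $\M_{T^\oplus}\models T_0^\oplus$. A generic element of $T_0^\oplus$ has the form $\phi^+$ where $\phi\in T_0$ and $\bullet^+$ is some stratifier. Since $\phi$ is purely arithmetical, no $K$ operator appears in $\phi$, so by the recursive definition of a stratifier (Definition \ref{stratifierdefn}), $\phi^+\equiv \phi$.

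Next I would invoke the fact that $\M_{T^\oplus}$ has standard first-order part (this is built into Definition \ref{stratifiedmodel}). Consequently $\M_{T^\oplus}$ restricted to $\LPA$ is just the standard model $\N$. Since $\phi$ is a true $\LPA$-sentence, $\N\models\phi$, hence $\M_{T^\oplus}\models\phi\equiv\phi^+$. By arbitrariness of $\phi\in T_0$ and of the stratifier $\bullet^+$, we conclude $\M_{T^\oplus}\models T_0^\oplus$.

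There really is no obstacle here; the lemma is essentially immediate from the fact that stratifiers do not alter $K$-free formulas and that the intended stratified structure $\M_{T^\oplus}$ is standard on its arithmetical part. The role of the lemma is simply to record that purely arithmetical truths can be freely folded into any background theory without spoiling upgenericity, so that (together with the union lemma Lemma \ref{upgenericunions}) they may be combined with the schemata $E_1$, $E_3$, $E'_2$, and Assigned Validity in assembling the upgeneric background theory used for the main theorem.
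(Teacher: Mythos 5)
Your proof is correct and follows exactly the paper's (one-line) argument: the paper's entire proof is ``Trivial: $\M_T$ has standard first-order part,'' and your write-up simply makes explicit the two facts this relies on, namely that stratifiers fix $K$-free formulas and that $\M_{T^\oplus}$ is standard on its arithmetical part. No differences in substance.
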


\begin{proof}
Trivial: $\M_T$ has standard first-order part.
\end{proof}

\begin{lemma}
\label{eaisupgeneric}
The schema consisting of the axioms of Epistemic Arithmetic (Peano Arithmetic with induction extended to 
$\LEA$) is upgeneric.
\end{lemma}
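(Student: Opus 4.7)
My plan is to split the axioms of Epistemic Arithmetic into two upgeneric pieces and combine them via Lemma \ref{upgenericunions}. The non-induction axioms of Peano arithmetic are $\LPA$-sentences true in $\N$, so Lemma \ref{trivialgenericlemma} applies to them directly. The bulk of the work is to show that the extended induction schema is upgeneric.

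Fix an $\LEA$-theory $T\supseteq\text{EA}$, a stratifier $\bullet^+$, and an induction instance $\ucl{\phi(x|0)\wedge\forall x(\phi\to\phi(x|S(x)))\to\forall x\,\phi}$. First I would verify by a short structural induction that $\bullet^+$ commutes with substitution of $\LPA$-terms, i.e.\ $(\phi(x|t))^+\equiv\phi^+(x|t)$ for any $\LPA$-term $t$: such $t$ introduces no new $K$-operators, so at every subformula the ordinal chosen by $\bullet^+$ is unchanged. Consequently the stratified induction axiom is just the induction axiom for $\phi^+$ viewed as an $\Lom$-formula.

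Then I would show $\M_{T^\oplus}$ satisfies the stratified axiom by ordinary induction on $\N$. Fix an assignment $s$, assume both antecedents, and let $A=\{n\in\N:\M_{T^\oplus}\models\phi^+[s(x|n)]\}$; I want $A=\N$. The crucial tool is Lemma \ref{scriptmbehavesasintended}, which gives $\M_{T^\oplus}\models\chi[s']\Leftrightarrow\M_{T^\oplus}\models\chi^{s'}$ for every $\Lom$-formula $\chi$ and assignment $s'$. Applying this lemma to both sides, the base case $0\in A$ reduces to the syntactic identity $(\phi^+(x|0))^s\equiv(\phi^+)^{s(x|0)}$ (both sides replace $x$ by $\overline 0=0$ and the remaining free variables by their $s$-numerals). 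The successor step reduces to the analogous identity $(\phi^+(x|S(x)))^{s(x|n)}\equiv(\phi^+)^{s(x|n+1)}$, which holds because $S(\overline n)$ and $\overline{n+1}$ are literally the same $\LPA$-term by the recursive definition of numerals. Induction on $\N$ then yields $A=\N$, hence $\M_{T^\oplus}\models\forall x\,\phi^+[s]$.

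The main obstacle I was wary of is that substitution through $K^\alpha$-operators is generally delicate in modal logic, so one cannot just invoke the usual first-order lemma ``$\M\models\phi(x|t)[s]$ iff $\M\models\phi[s(x|t^\M)]$''. But because $\M_{T^\oplus}$'s operator semantics are defined through the $\bullet^s$ operation (Definition \ref{stratifiedmodel}), Lemma \ref{scriptmbehavesasintended} turns every such semantic question into the syntactic identities above, after which nothing remains but ordinary induction on $\N$.
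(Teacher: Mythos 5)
Your proposal is correct and matches the paper's own proof in all essentials: non-induction axioms are handled by Lemma \ref{trivialgenericlemma}, and induction instances are verified in $\M_{T^\oplus}$ by reducing, via Lemma \ref{scriptmbehavesasintended}, to the syntactic identities $(\phi^+(x|0))^s\equiv(\phi^+)^{s(x|0)}$ and $(\phi^+(x|S(x)))^{s(x|n)}\equiv(\phi^+)^{s(x|n+1)}$ followed by ordinary induction on $\N$. Your explicit check that the stratifier commutes with substitution of $\LPA$-terms is a detail the paper leaves implicit, but it does not change the argument.
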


\begin{proof}
Let $T\supseteq\mbox{(Epistemic Arithmetic)}$.
Let $\sigma$ be an axiom of Epistemic Arithmetic, $\bullet^+$ a stratifier.
If $\sigma$ is not an induction instance, then
$\M_{T^\oplus}\models \sigma^+$ by Lemma \ref{trivialgenericlemma}.
But suppose $\sigma$ is an instance
\[
\ucl{\phi(x|0)\rightarrow\forall x(\phi\rightarrow\phi(x|S(x)))\rightarrow\forall x\phi}
\]
of induction, so that $\sigma^+$ is
$\ucl{\phi^+(x|0)\rightarrow \forall x(\phi^+\rightarrow\phi^+(x|S(x)))\rightarrow\forall x\phi^+}$.
To show $\M_{T^\oplus}\models\sigma^+$, let $s$ be an assignment and assume
$\M_{T^\oplus}\models \phi^+(x|0)[s]$ and
$\M_{T^\oplus}\models\forall x(\phi^+\rightarrow\phi^+(x|S(x)))[s]$.
Then
\begin{align*}
\M_{T^\oplus} &\models \phi^+(x|0)^s
  &\mbox{(Lemma \ref{scriptmbehavesasintended})}\\
\M_{T^\oplus} &\models (\phi^+)^{s(x|0)}
  &\mbox{(Clearly $\psi(x|0)^s\equiv \psi^{s(x|0)}$)}\\
\forall n\in\N,\mbox{ if }\M_{T^\oplus}\models \phi^+[s(x|n)], &\mbox{ then }
\M_{T^\oplus}\models \phi^+(x|S(x))[s(x|n)]
  &\mbox{(First-order semantics of $\forall$ and $\rightarrow$)}\\
\forall n\in\N,\mbox{ if }\M_{T^\oplus}\models (\phi^+)^{s(x|n)}, &\mbox{ then }
\M_{T^\oplus}\models (\phi^+(x|S(x)))^{s(x|n)}
  &\mbox{(Lemma \ref{scriptmbehavesasintended})}\\
\forall n\in\N,\mbox{ if }\M_{T^\oplus}\models (\phi^+)^{s(x|n)}, &\mbox{ then }
\M_{T^\oplus}\models (\phi^+)^{s(x|n+1)}
  &\mbox{(Clearly $\psi(x|S(x))^{s(x|n)}\equiv \psi^{s(x|n+1)}$)}\\
\forall n\in\N,\mbox{ }\M_{T^\oplus} &\models (\phi^+)^{s(x|n)}
  &\mbox{(Mathematical induction)}\\
\forall n\in\N,\mbox{ }\M_{T^\oplus} &\models (\phi^+)[s(x|n)]
  &\mbox{(Lemma \ref{scriptmbehavesasintended})}\\
\M_{T^\oplus} &\models \forall x\phi^+[s]\mbox{, as desired.}
  &\mbox{(First-order semantics of $\forall$)} 
\end{align*}
\end{proof}

Armed with Lemmas \ref{genericunions} and \ref{upgenericunions},
computations such as Lemmas \ref{etwoisgeneric}, \ref{etwoprimeisupgeneric},
\ref{assignedvalidityisupgeneric},
\ref{trivialgenericlemma} and \ref{eaisupgeneric}
can be used as building blocks for background theories of knowledge.
Often, schemas we would like as building blocks are not (up)generic in isolation,
but become so when paired with other building blocks, as in the following three
lemmas.

\begin{lemma}
\label{eoneandassignedvalidityisupgeneric}
$E_1\cup (\mbox{Assigned Validity})$ is upgeneric
($E_1$ consists of $\ucl{K\phi}$ whenever $\phi$ is valid).
\end{lemma}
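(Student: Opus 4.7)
The plan is to handle the two components of the schema, $E_1$ and Assigned Validity, separately. Fix an arbitrary $\LEA$-theory $T\supseteq E_1\cup(\mbox{Assigned Validity})$. Every element of $(E_1\cup\mbox{Assigned Validity})^\oplus$ is either the stratification of an Assigned Validity instance, for which $\M_{T^\oplus}$ satisfies it by Lemma~\ref{assignedvalidityisupgeneric} (whose hypotheses hold since $T\supseteq\mbox{Assigned Validity}$), or the stratification of an $E_1$ instance, namely $\ucl{(K\phi)^+}$ for some valid $\phi$ and stratifier $\bullet^+$. The heart of the proof is the $E_1$ case.

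So fix $\phi$ valid, a stratifier $\bullet^+$, and write $(K\phi)^+\equiv K^\alpha\phi^+$. I want $\M_{T^\oplus}\models\ucl{K^\alpha\phi^+}$; fixing an arbitrary assignment $s$ and applying Definition~\ref{stratifiedmodel}, this reduces to $T^\oplus\cap\alpha\models(\phi^+)^s$. My strategy is to exhibit $(\phi^+)^s$ explicitly as a member of $T^\oplus\cap\alpha$. Since $\phi$ is valid and $T$ contains Assigned Validity, $\phi^s\in T$, and hence $(\phi^s)^+\in T^\oplus$ using the same stratifier $\bullet^+$. A straightforward induction on formula structure (upgrading Lemma~\ref{assignmentplayswellwithstratifier} from ordinals to full formulas) yields $(\phi^s)^+\equiv(\phi^+)^s$, so $(\phi^+)^s\in T^\oplus$.

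The remaining obstacle, and the only nonroutine step, is verifying $(\phi^+)^s$ actually lies in $T^\oplus\cap\alpha$; since numeral substitution leaves $K^\beta$ operators untouched, $\onset((\phi^+)^s)=\onset(\phi^+)$, so the task reduces to $\onset(\phi^+)\subseteq\alpha$. I will prove by induction on an $\LEA$-formula $\psi$ that $\onset(\psi^+)$ is always a finite initial segment of the set $X\subseteq\epom$ defining the stratifier. The atomic case and the $\neg$, $\to$, $\forall$ cases are immediate, using that the union of two finite initial segments of $X$ is again an initial segment. For $\psi\equiv K\psi_0$, the IH makes $\onset(\psi_0^+)$ an initial segment of $X$, and the stratifier picks $\beta=\min(X\setminus\onset(\psi_0^+))$---precisely the next element of $X$---so $\{\beta\}\cup\onset(\psi_0^+)$ is still an initial segment. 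Applied to our $\phi$: $\onset(\phi^+)$ is an initial segment of $X$ and $\alpha$ is the next element of $X$ above it, whence every ordinal in $\onset(\phi^+)$ is strictly less than $\alpha$, giving $\onset(\phi^+)\subseteq\alpha$. With $(\phi^+)^s\in T^\oplus\cap\alpha$, we immediately conclude $T^\oplus\cap\alpha\models(\phi^+)^s$, completing the proof.
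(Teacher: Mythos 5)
Your proposal is correct and follows essentially the same route as the paper: dispatch Assigned Validity via Lemma~\ref{assignedvalidityisupgeneric}, then for an $E_1$ instance $\ucl{K\phi}$ observe that $(\phi^+)^s\equiv(\phi^s)^+$ is itself an instance of $(\mbox{Assigned Validity})^\oplus$ lying in $T^\oplus\cap\alpha$, so Definition~\ref{stratifiedmodel} gives $\M_{T^\oplus}\models K^\alpha\phi^+[s]$. The only difference is that you spell out the ``initial segment of $X$'' argument for $\onset(\phi^+)\subseteq\alpha$, which the paper treats as immediate from Definition~\ref{stratifierdefn}; your verification of it is correct.
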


\begin{proof}
Let $T\supseteq E_1\cup (\mbox{Assigned Validity})$.
By Lemma \ref{assignedvalidityisupgeneric},
$\M_{T^\oplus}\models\mbox{(Assigned Validity)}^\oplus$,
we need only show $\M_{T^\oplus}\models E^\oplus_1$.
Let $\phi$ be valid, $\bullet^+$ any stratifier, and $s$ any assignment.
Since $T\supseteq (\mbox{Assigned Validity})$,
$T^\oplus$ contains the instance
\[(\phi^s)^+\equiv (\phi^+)^s\] of $(\mbox{Assigned Validity})^\oplus$.
In fact, $T^\oplus\cap\alpha$ contains $(\phi^+)^s$,
where $\alpha$ is such that $(K\phi)^+\equiv K^\alpha\phi^+$.
Thus by Definition \ref{stratifiedmodel}, $\M_{T^\oplus}\models K^{\alpha}\phi^+[s]$,
that is, $\M_{T^\oplus}\models (K\phi)^+[s]$.
This shows $\M_{T^\oplus}\models E^\oplus_1$.
\end{proof}

\begin{lemma}
\label{kclosurelemma}
For any upgeneric $T_0$, $T_0\cup K(T_0)$ is upgeneric,
where $K(T_0)$ consists of $K\phi$ whenever $\phi\in T_0$.
Similarly with ``upgeneric'' replaced by ``r.e.-upgeneric'',
``closed-upgeneric'', ``closed-r.e.-upgeneric'', ``generic'',
``r.e.-generic'', ``closed-generic'', or ``closed-r.e.-generic'' throughout.
\end{lemma}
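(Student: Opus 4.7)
The plan is to treat all eight adjectives uniformly by isolating what needs to be shown in each case: that $\mathscr N_T\models K(T_0)$ (for the generic variants) or $\M_{T^\oplus}\models K(T_0)^\oplus$ (for the upgeneric variants) whenever $T\supseteq T_0\cup K(T_0)$, since the part about $T_0$ itself follows immediately from the hypothesis that $T_0$ has the relevant genericity property applied to the extension $T$. The ``closed'' variants are trivial because $K$-closedness of $T_0$ makes $T_0\cup K(T_0)=T_0$, and the r.e.\ variants only need the additional observation that $K(T_0)$ is r.e.\ when $T_0$ is. So the real work is the plain upgeneric case.

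For that case, fix $\phi\in T_0$ (a sentence) and a stratifier $\bullet^+$ given by some infinite $X\subseteq\epom$, and write $(K\phi)^+\equiv K^\alpha\phi^+$. Since $\phi^+$ is a sentence we have $(\phi^+)^s\equiv\phi^+$ for any assignment $s$, so by Definition \ref{stratifiedmodel} the desired assertion $\M_{T^\oplus}\models K^\alpha\phi^+[s]$ reduces to $T^\oplus\cap\alpha\models\phi^+$. Since $\phi\in T_0\subseteq T$, we have $\phi^+\in T^\oplus$, so it suffices to establish that $\onset(\phi^+)\subseteq\alpha$.

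The essential ingredient is a short sublemma, proved by routine structural induction: for any stratifier $\bullet^+$ given by $X$ and any $\LEA$-formula $\psi$, the set $\onset(\psi^+)$ is an initial segment of $X$ (i.e., $\onset(\psi^+)\subseteq X$ and downward closed in $X$). The atomic case is vacuous; connective and quantifier cases follow because any two initial segments of a linearly ordered set are comparable by inclusion, so their union is an initial segment; and in the $K\psi_0$ case, Definition \ref{stratifierdefn} picks the new ordinal as the least element of $X\setminus\onset(\psi_0^+)$, which by the induction hypothesis simply extends the initial segment by one step. Applying this sublemma to $\phi$, we get that $\onset(\phi^+)$ is an initial segment of $X$, and then $\alpha$, being the least element of $X$ outside this initial segment, lies strictly above every element of $\onset(\phi^+)$, giving $\onset(\phi^+)\subseteq\alpha$.

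The main obstacle I anticipate is recognizing the initial-segment invariant as the right one; without it, the attempt to conclude $\phi^+\in T^\oplus\cap\alpha$ fails because the recursive clause only says that $\alpha\notin\onset(\phi^+)$, not \emph{a priori} that $\alpha$ dominates it. Once that invariant is stated and proved by induction, the rest of the argument is purely bookkeeping against the definitions, and the generic case ($\mathscr N_T\models K\phi$ from $T\models\phi$ via Definition \ref{defnofintendedmodel}) is immediate.
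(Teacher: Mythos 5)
Your proof is correct and follows essentially the same route as the paper's: reduce to showing $\M_{T^\oplus}\models(K\phi)^+$ for $\phi\in T_0$, and observe that $\onset(\phi^+)\subseteq\alpha$ forces $\phi^+\in T^\oplus\cap\alpha$, whence $\M_{T^\oplus}\models K^\alpha\phi^+$ by Definition \ref{stratifiedmodel}. The only difference is that the paper asserts $\onset(\phi^+)\subseteq\alpha$ directly ``by Definition \ref{stratifierdefn},'' whereas you justify it with the initial-segment invariant on $\onset(\psi^+)\subseteq X$ --- a detail worth having, since the definition alone only gives $\alpha\notin\onset(\phi^+)$.
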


\begin{proof}
We prove the upgeneric statement.
Suppose $T_0$ is upgeneric
and $T\supseteq T_0\cup K(T_0)$.
Since $T_0$ is upgeneric and $T\supseteq T_0$, $\M_{T^\oplus}\models T^\oplus_0$.
It remains to show $\M_{T^\oplus}\models (K\phi)^+$
for any sentence $\phi\in T_0$ and stratifier $\bullet^+$.
Let $\alpha$ be such that $(K\phi)^+\equiv K^\alpha\phi^+$.
By Definition \ref{stratifierdefn}, $\onset(\phi^+)\subseteq \alpha$
and thus $\phi^+\in T_0^\oplus\cap\alpha\subseteq T^\oplus\cap\alpha$.
Since $T^\oplus\cap\alpha\models \phi^+$, $\M_{T^\oplus}\models K^\alpha\phi^+$
as desired.
\end{proof}

We will not use the following lemma,
but it illuminates differences between this paper's
upward approach
and Carlson's original downward approach.

\begin{lemma}
\label{everythingworksgivenetwo}
$E_1\cup E_2\cup E_4\cup\mbox{(Epistemic Arithmetic)}$ is closed-generic.
\end{lemma}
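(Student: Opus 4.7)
The plan is to decompose $T_0 := E_1\cup E_2\cup E_4\cup\mbox{(Epistemic Arithmetic)}$ into its four constituent schemas and verify that each gives its contribution to $\mathscr N_T$ whenever $T\supseteq T_0$ is $K$-closed; then appeal to Lemma \ref{genericunions}(2) to glue the pieces together. A minor preliminary step is to note that $T_0$ (or its $K$-closure, per Lemma \ref{kclosurelemma}) is $K$-closed as required by Definition \ref{baregenericdefn}. The real work is ensuring each schema is satisfied in $\mathscr N_T$; the $K$-closedness of $T$ is only used in the clause for $E_4$.

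The three easy clauses: (i) $E_1$ is satisfied in $\mathscr N_T$ for \emph{any} $T$, because if $\phi$ is valid then $\phi^s$ is valid (validity is preserved by substituting numerals into free variables), so $T\models\phi^s$ trivially and thus $\mathscr N_T\models K\phi[s]$ for every $s$. (ii) $E_2$ is generic by Lemma \ref{etwoisgeneric}, so in particular holds in $\mathscr N_T$. (iii) Epistemic Arithmetic holds in $\mathscr N_T$ by the same standard-first-order-part plus induction-on-$\N$ argument used in Lemma \ref{eaisupgeneric}, the only interesting instances being induction axioms, which follow by ordinary induction on $\N$ applied to the $\LEA$-formula in question (assignments range over $\N$ and each step uses the first-order semantics of $\forall$ and $\rightarrow$).

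The main step is the clause for $E_4$. Fix an $\LEA$-formula $\phi$ and assignment $s$ with $\mathscr N_T\models K\phi[s]$; we must show $\mathscr N_T\models KK\phi[s]$, i.e.\ $T\models K\phi^s$. From $\mathscr N_T\models K\phi[s]$ and Definition \ref{defnofintendedmodel}, $T\models\phi^s$, so Theorem \ref{completenesscompactness}(4) yields $\tau_1,\ldots,\tau_n\in T$ with $(\bigwedge_i\tau_i)\rightarrow\phi^s$ a valid sentence. Then $K((\bigwedge_i\tau_i)\rightarrow\phi^s)\in E_1\subseteq T$, and each $K\tau_i\in T$ by $K$-closedness of $T$. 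A short inductive ``$K$ distributes over conjunction'' argument---using $E_1$ to supply $K$ of each valid propositional tautology $A\rightarrow B\rightarrow (A\wedge B)$, and $E_2$ to discharge the implications---upgrades the collection $K\tau_1,\ldots,K\tau_n$ to $K(\bigwedge_i\tau_i)$. One final application of $E_2$ to $K((\bigwedge_i\tau_i)\rightarrow\phi^s)$ and $K(\bigwedge_i\tau_i)$ delivers $K\phi^s$, hence $T\models K\phi^s$ as needed.

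The main obstacle is bookkeeping for this $E_4$ step: one has to check carefully that each auxiliary formula invoked ($K$ of a valid sentence; $K$ of each $\tau_i$; instances of $E_2$) is genuinely in $T$, not just semantically entailed by it, because Definition \ref{defnofintendedmodel} requires membership-driven $T\models\cdot$ for $K$, while modus ponens is exercised inside $T\models\cdot$. The key leverage is that $K$-closedness of $T$ converts the syntactic premise $\tau_i\in T$ into $K\tau_i\in T$ exactly once per $\tau_i$; no further use of $K$-closure is needed, and crucially no appeal to any kind of stratification or transfinite induction occurs, marking the contrast with the upward approach employed elsewhere in the paper.
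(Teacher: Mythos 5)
Your treatment of $E_2$, Epistemic Arithmetic, and $E_4$ matches the paper's proof (the $E_4$ chain --- compactness, $E_1$ applied to the valid implication $\left(\bigwedge_i\tau_i\right)\rightarrow\phi^s$, $K$-closure to obtain each $K\tau_i$, repeated $E_2$, and modus ponens inside $T\models\cdot$ --- is exactly what the paper does). The genuine gap is your clause (i) for $E_1$. You assert that if $\phi$ is valid then $\phi^s$ is valid because ``validity is preserved by substituting numerals into free variables.'' That is true in first-order logic but false in the base logic: Definition \ref{baselogicdefn} guarantees only Weak Substitution (variable for variable), and its footnote explicitly warns that the general substitution law for terms is not valid in modal logic. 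Concretely, $\forall x\, K(x=0)\rightarrow K(y=0)$ is valid (via Weak Substitution), but $\forall x\, K(x=0)\rightarrow K(\overline{3}=0)$ is not: a structure may interpret $K$ to hold of the formula $x=0$ under every assignment while failing to hold of the sentence $\overline{3}=0$, and this violates none of Conditions 1--3. This is precisely why Assigned Validity is a nontrivial schema in this paper rather than a subset of the valid sentences, and why the paper calls the $E_1$ step of this lemma ``tricky'': its proof must first establish $T\models(\mbox{Assigned Validity})$ by citing Carlson's Lemmas 5.23 and 7.1, and only then conclude $\mathscr N_T\models E_1$ via Lemmas \ref{eoneandassignedvalidityisupgeneric} and \ref{upgenericimpliesgeneric}. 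Your argument has no substitute for that step, and it is the crux of the lemma.

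A secondary issue: you cannot literally glue the pieces together with Lemma \ref{genericunions}(2), since that lemma requires each constituent to be closed-generic in isolation, and neither $E_1$ nor $E_4$ is (your own verification of $E_4$ leans on $E_1$ and $E_2$ being present in $T$). The correct framing, which your schema-by-schema verification otherwise already follows, is to fix an arbitrary $K$-closed $T$ containing the whole union and check each schema directly in $\mathscr N_T$, as the paper does.
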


\begin{proof}
Let $T$ be a $K$-closed theory containing $E_1$, $E_2$, $E_4$ and $\mbox{(Epistemic Arithmetic)}$.

By Lemma \ref{etwoisgeneric}, $\mathscr N_T\models E_2$.
By Lemmas \ref{eaisupgeneric} and \ref{upgenericimpliesgeneric}, $\mathscr N_T\models \mbox{(Epistemic Arithmetic)}$.
It remains to show $\mathscr N_T\models E_1\cup E_4$.  We will show $\mathscr N_T\models E_4$
and sketch $\mathscr N_T\models E_1$.

The typical sentence in $E_4$ is $\ucl{K\phi\rightarrow KK\phi}$.
Let $s$ be an assignment and assume $\mathscr N_T\models K\phi[s]$.
Then
\begin{align*}
T &\models \phi^s
  &\mbox{(Definition \ref{defnofintendedmodel})}\\
\exists \tau_1,\ldots,\tau_n\in T &\mbox{ s.t.}
\left(\wedge_{i=1}^n \tau_i\right)\rightarrow \phi^s
\mbox{ is valid}
  &\mbox{(Theorem \ref{completenesscompactness})}\\
T &\models K\left(\left(\wedge_{i=1}^n \tau_i\right)\rightarrow \phi^s\right)
  &\mbox{($T$ contains $E_1$)}\\
T &\models \left(\wedge_{i=1}^n K(\tau_i)\right)\rightarrow K\phi^s
  &\mbox{(Repeated applications of $E_2$ in $T$)}\\
T &\models \wedge_{i=1}^n K(\tau_i)
  &\mbox{($T$ is $K$-closed)}\\
T &\models K\phi^s
  &\mbox{(Modus Ponens)}\\
\mathscr N_T &\models KK\phi[s].
  &\mbox{(Definition \ref{defnofintendedmodel})}
\end{align*}
This shows $\mathscr N_T\models E_4$.

Because of the lack of Assigned Validity, showing $\M_T\models E_1$ is tricky.
We indicate a rough sketch.
Carlson's Lemmas 5.23 and 7.1 \cite{carlson2000} (pp.~69 \& 72)
imply $T\models (\mbox{Assigned Validity})$
(we invoke Lemma 7.1 with $\mathscr Q$ a singleton).
As written, Lemma 5.23 demands $T$ also contain $E_3$, but it can be shown this is unnecessary.
Thus we may assume $T$ contains Assigned Validity.
By Lemmas \ref{eoneandassignedvalidityisupgeneric} and \ref{upgenericimpliesgeneric},
$\mathscr N_T\models E_1$.
\end{proof}

Lemma \ref{everythingworksgivenetwo}
explains why weakening $E_2$ to $E'_2$ required two other
seemingly-unrelated weakenings:  adding Assigned Validity, and removing 
$E_4$ altogether.

\begin{lemma}
\label{smtisreupgeneric}
The Mechanicalness schema,
\[
\ucl{\exists e\forall x(K\phi\leftrightarrow x\in W_e)}\,\,\,\,\mbox{($e\not\in \FV(\phi)$)},
\]
is r.e.-upgeneric.
\end{lemma}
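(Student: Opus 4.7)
The plan is to exhibit, for each stratified Mechanicalness instance, an explicit r.e.~index witnessing the existential quantifier over $e$.

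\textbf{Setup.} Mechanicalness is visibly r.e., so I fix an arbitrary r.e.~$\LEA$-theory $T$ containing Mechanicalness and aim to show $\M_{T^\oplus}\models(\mbox{Mechanicalness})^\oplus$. A preliminary fact I need is that $T^\oplus$ is r.e.: since $T$ is r.e.~and the action of a stratifier $\bullet^+$ on a fixed finite formula depends only on finitely much data from its defining set $X\subseteq\epom$, one can effectively enumerate pairs consisting of a sentence in $T$ together with a finite combinatorial datum determining the labels, and output the corresponding $\phi^+$. Consequently $T^\oplus\cap\alpha$ is r.e.~for each $\alpha\in\epom$ (with a computable ordinal representation), and by Theorem \ref{completenesscompactness} its consequence set is r.e.~as well.

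\textbf{Typical stratified instance.} A general element of $(\mbox{Mechanicalness})^\oplus$ has the form
$\ucl{\exists e\forall x(K^\alpha\phi^+\leftrightarrow x\in W_e)}$,
obtained from some Mechanicalness instance $\ucl{\exists e\forall x(K\phi\leftrightarrow x\in W_e)}$ (with $e\notin\FV(\phi)$) by a stratifier $\bullet^+$. Here $\alpha\in\epom$ is the ordinal assigned by $\bullet^+$ to the $K\phi$ subformula, and by Lemma \ref{assignmentplayswellwithstratifier} this $\alpha$ is not disturbed by later substitutions of numerals for $x$. Given an arbitrary assignment $s$, I must find $m\in\N$ such that for every $n\in\N$,
$\M_{T^\oplus}\models K^\alpha\phi^+[s(x|n)]$ iff $n\in W_m$.

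\textbf{Finishing.} By Definition \ref{stratifiedmodel}, the left side is equivalent to $T^\oplus\cap\alpha\models(\phi^+)^{s(x|n)}$. Since $s$ is fixed and $n\mapsto(\phi^+)^{s(x|n)}$ is computable, the set of $n$ satisfying this is a computable slice of the r.e.~consequence set of $T^\oplus\cap\alpha$, hence r.e.; take $m$ to be an index for it. Because $e\notin\FV(\phi)$, assigning $m$ to $e$ does not disturb satisfaction of $K^\alpha\phi^+$, so $\M_{T^\oplus}$ satisfies the stratified instance.

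\textbf{Main obstacle.} The only non-routine step is justifying that $T^\oplus$ (and hence $T^\oplus\cap\alpha$) is r.e.; this requires organizing the effective enumeration of stratifier outputs. Once that is in hand, the rest is a direct translation between Definition \ref{stratifiedmodel} and the $W_e$-predicate via Theorem \ref{completenesscompactness}.
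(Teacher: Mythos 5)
Your proposal is correct and follows essentially the same route as the paper: for each assignment $s$ you take the witness for $\exists e$ to be an r.e.\ index for $\{n:T^\oplus\cap\alpha\models(\phi^+)^{s(x|n)}\}$, translate via Definition \ref{stratifiedmodel}, and use $e\notin\FV(\phi)$ to finish. The only difference is that you spell out why $T^\oplus\cap\alpha$ is r.e.\ (enumerating stratifier outputs via their finite traces), a point the paper compresses into an appeal to the Church--Turing Thesis; that extra care is sound and fills a step the paper leaves implicit.
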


\begin{proof}
Let $T$ be any r.e.~$\LEA$-theory containing the Mechanicalness schema.
Let $\bullet^+$ be a stratifier and let $\alpha$ be such that $(K\phi)^+\equiv K^\alpha\phi^+$.
We must show
\[
\M_{T^\oplus}\models \ucl{\exists e\forall x(K^\alpha\phi^+\leftrightarrow x\in W_e)}.
\]
Let $s$ be any assignment and note
\begin{align*}
\{q\in\N\,:\,\M_{T^\oplus}\models K^\alpha\phi^+[s(x|q)]\}
&=
\{q\in\N\,:\,T^\oplus\cap\alpha\models (\phi^+)^{s(x|q)}\}.
  &\mbox{(Definition \ref{stratifiedmodel})}
\end{align*}
By the Church--Turing Thesis, the latter set is r.e., so there is some $p\in\N$ such that
\[
W_p = \{q\in\N\,:\,\M_{T^\oplus}\models K^\alpha\phi^+[s(x|q)]\}.
\]
For all $q\in\N$, the following biconditionals are equivalent:
\begin{align*}
\M_{T^\oplus}\models K^\alpha\phi^+ &\leftrightarrow x\in W_e[s(e|p)(x|q)]\\
\M_{T^\oplus}\models K^\alpha\phi^+[s(e|p)(x|q)]
&\mbox{ iff } \M_{T^\oplus}\models x\in W_e[s(e|p)(x|q)]
  &\mbox{(First-order semantics of $\leftrightarrow$)}\\
\M_{T^\oplus}\models K^\alpha\phi^+[s(x|q)]
&\mbox{ iff } \M_{T^\oplus}\models x\in W_e[s(e|p)(x|q)]
  &\mbox{(Since $e\not\in\FV(\phi)$)}\\
\M_{T^\oplus}\models K^\alpha\phi^+[s(x|q)]
&\mbox{ iff } q\in W_p.
  &\mbox{(Since $\M_{T^\oplus}$ has standard first-order part)}
\end{align*}
The latter is true by definition of $p$.
By arbitrariness of $q$, $\M_{T^\oplus}\models \exists e\forall x(K^\alpha\phi^+\leftrightarrow x\in 
W_e)[s]$.
\end{proof}

\begin{corollary}
\label{evenweakerweak}
(Recall the definition of $T^w_{\text{SMT}}$ from the end of Section \ref{prelimsect})
Let $(T^w_{\text{SMT}})\backslash E_3$ be the smallest $K$-closed theory containing $E_1$,
Assigned Validity, $E'_2$, Epistemic Arithmetic, and Mechanicalness.
(Loosely speaking, $T^w_{\text{SMT}}$ minus $E_3$.)  Then $(T^w_{\text{SMT}})\backslash E_3$
is r.e.-upgeneric.
\end{corollary}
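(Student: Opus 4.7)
The plan is to assemble $(T^w_{\text{SMT}})\backslash E_3$ from r.e.-upgeneric building blocks, then propagate r.e.-upgenericity through iterated $K$-closure, invoking only the assembly lemmas from Section \ref{genericaxiomssectn}.

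First I would form the base $T_0 = E_1\cup(\text{Assigned Validity})\cup E'_2\cup(\text{Epistemic Arithmetic})\cup(\text{Mechanicalness})$. Each summand except Mechanicalness is upgeneric: by Lemma \ref{eoneandassignedvalidityisupgeneric} for $E_1\cup(\text{Assigned Validity})$, by Lemma \ref{etwoprimeisupgeneric} for $E'_2$, and by Lemma \ref{eaisupgeneric} for Epistemic Arithmetic; Mechanicalness is r.e.-upgeneric by Lemma \ref{smtisreupgeneric}. Each summand is also r.e.: $E_1$ and Assigned Validity because the set of valid $\LEA$-formulas is r.e.\ by Theorem \ref{completenesscompactness}(1); $E'_2$ because $\depth$ is computable; Epistemic Arithmetic and Mechanicalness by inspection. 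Since upgeneric plus r.e.\ implies r.e.-upgeneric, every summand is r.e.-upgeneric, and then Lemma \ref{upgenericunions}(3) (noting that $T_0$ itself is r.e.) shows $T_0$ is r.e.-upgeneric.

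Next I would take the $K$-closure iteratively. Set $T_{n+1}=T_n\cup K(T_n)$ and let $T_\infty=\bigcup_{n\in\N}T_n$; a quick check shows $T_\infty$ is the smallest $K$-closed $\LEA$-theory containing $T_0$, i.e., $T_\infty=(T^w_{\text{SMT}})\backslash E_3$. By induction on $n$, using the r.e.-upgeneric version of Lemma \ref{kclosurelemma}, each $T_n$ is r.e.-upgeneric. Each $T_n$ is r.e.\ (prefixing an r.e.\ theory by $K$ and taking finite unions preserves r.e.), so the countable union $T_\infty$ is r.e.\ as well. A final application of Lemma \ref{upgenericunions}(3) then yields that $T_\infty$ is r.e.-upgeneric, which is the desired conclusion.

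The only point that might require care is the r.e.-bookkeeping: Lemma \ref{upgenericunions}(3) has an r.e.\ hypothesis on the union, so we must verify effective enumerability both for $T_0$ and for $T_\infty$. This is straightforward here since each base component is defined by a decidable syntactic condition or by an r.e.\ validity test, and iterated $K$-prefixing is purely syntactic; there is no circularity because r.e.-upgenericity quantifies only over r.e.\ extensions $T\supseteq T_\infty$, and such a $T$ automatically extends each $T_n$. Beyond that, the proof is a direct assembly of the building-block lemmas, which is precisely the modularity the genericity framework was designed to provide.
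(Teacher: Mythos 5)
Your proposal is correct and is essentially the argument the paper intends (the corollary is stated without proof precisely because it is this direct assembly): combine Lemmas \ref{eoneandassignedvalidityisupgeneric}, \ref{etwoprimeisupgeneric}, \ref{eaisupgeneric}, and \ref{smtisreupgeneric} via Lemma \ref{upgenericunions}, then iterate Lemma \ref{kclosurelemma} and take the union to reach the smallest $K$-closed theory. Your explicit attention to the r.e.\ hypotheses (validity being r.e.\ by Theorem \ref{completenesscompactness}, decidability of $\depth$, and preservation of r.e.-ness under $K$-prefixing and countable union) is exactly the bookkeeping the paper leaves tacit, and treating $E_1\cup(\mbox{Assigned Validity})$ as a single block rather than claiming $E_1$ alone is upgeneric is the right move.
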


\section{The Main Result}
\label{mainresultsect}

With the machinery of Section \ref{genericaxiomssectn},
we are able to state our main result in a generalized form.
Informally:
\begin{quote}
An r.e.-upgeneric theory
remains true
upon augmentation by knowledge of its own truthfulness.
\end{quote}
Reinhardt's conjecture (proved by Carlson) was that the Strong Mechanistic Thesis is consistent with
a particular background theory of knowledge.
We showed (Lemma \ref{smtisreupgeneric}) that Mechanicalness is r.e.-upgeneric.
By Lemma \ref{kclosurelemma}, the pair consisting of Mechanicalness and the Strong Mechanistic Thesis,
is r.e.-upgeneric.
Thus as long as
the background theory of knowledge
is r.e.~and built of r.e.-generic pieces along with truthfulness,
the corresponding conjecture is a special case of this main result.

Recall
(Definition \ref{defnofintendedmodel}) that an $\LEA$-theory $T$ is \emph{true} if $\mathscr N_T\models T$.

\begin{theorem}
\label{themaintheorem}
Let $T_0$ be an r.e.-upgeneric $\LEA$-theory.
Let $T_1$ be $T_0\cup E_3$, that is, $T_0$ along with all axioms of the form $\ucl{K\phi\rightarrow\phi}$.
Let $T$ be the smallest $K$-closed theory containing $T_1$.
Then $T$ is true.
\end{theorem}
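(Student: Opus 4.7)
The plan is to use Corollary \ref{revelatorycorollary}, which reduces $\mathscr N_T\models T$ to $\M_{T^\oplus}\models T^\oplus$. First I would verify that $T$ is r.e.: $T_0$ is r.e.~by hypothesis, $E_3$ is a computable schema, and forming the smallest $K$-closed theory containing an r.e.~set is an r.e.~operation, so $T$ is r.e. This justifies later applying the r.e.-upgenericity of $T_0$ to the r.e.~overtheory $T\supseteq T_0$.

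Next, I would prove by transfinite induction on $\alpha\in\epom$ that $\M_{T^\oplus}\models T^\oplus\cap\alpha$, which gives $\M_{T^\oplus}\models T^\oplus$ in the limit. Fix $\alpha$ and assume the claim for all $\beta<\alpha$. Pick an arbitrary $\phi^+\in T^\oplus\cap\alpha$, so $\phi\in T$, $\bullet^+$ is some stratifier, and $\onset(\phi^+)\subseteq\alpha$. Since $T$ is built from $T_0\cup E_3$ by iterated $K$-closure, $\phi$ falls into one of three cases. \textbf{Case 1:} $\phi\in T_0$. Then r.e.-upgenericity of $T_0$ gives $\M_{T^\oplus}\models T_0^\oplus$, which includes $\phi^+$. \textbf{Case 2:} $\phi$ is of the form $K\phi_0$ for some $\phi_0\in T$. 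Write $\phi^+\equiv K^\delta\phi_0^+$, so $\delta<\alpha$ and, by Definition \ref{stratifierdefn}, $\onset(\phi_0^+)\subseteq\delta$. Hence $\phi_0^+\in T^\oplus\cap\delta$, so $T^\oplus\cap\delta\models\phi_0^+$ trivially, and Definition \ref{stratifiedmodel} then yields $\M_{T^\oplus}\models K^\delta\phi_0^+$; note this case needs only the structure of stratifiers, not the inductive hypothesis.

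\textbf{Case 3}, the $E_3$ case, is where the induction is really used. Here $\phi\equiv\ucl{K\psi\to\psi}$, so $\phi^+\equiv\ucl{K^\gamma\psi^+\to\psi^+}$ where $(K\psi)^+\equiv K^\gamma\psi^+$, and $\gamma\in\onset(\phi^+)\subseteq\alpha$ gives $\gamma<\alpha$. Let $s$ be an arbitrary assignment with $\M_{T^\oplus}\models K^\gamma\psi^+[s]$; by Definition \ref{stratifiedmodel} this means $T^\oplus\cap\gamma\models(\psi^+)^s$. The inductive hypothesis supplies $\M_{T^\oplus}\models T^\oplus\cap\gamma$, so $\M_{T^\oplus}\models(\psi^+)^s$, and Lemma \ref{scriptmbehavesasintended} gives $\M_{T^\oplus}\models\psi^+[s]$; thus $\M_{T^\oplus}\models\phi^+$.

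The main obstacle is conceptual rather than computational: one must confirm that the stratifier mechanism places the superscript $\gamma$ arising from $K\psi$ strictly below $\alpha$, and that the $K$-closure case contributes nothing delicate because its superscript also dominates all subformula superscripts. Once those bookkeeping facts are in hand, the $E_3$ reflection becomes a genuine transfinite induction localized at $\gamma<\alpha$ inside $\epom$, and the proof closes. This is exactly the payoff promised by Corollary \ref{revelatorycorollary}: truth of $T$ is established by induction only up to $\omega\cdot\omega$, with no appeal to the structural ordinal machinery of \cite{carlson1999}.
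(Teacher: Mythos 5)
Your proposal is correct and follows essentially the same route as the paper's own proof: reduce to $\M_{T^\oplus}\models T^\oplus$ via Corollary \ref{revelatorycorollary}, then induct up to $\epom$ with the same three-case split ($T_0$ handled by r.e.-upgenericity, $K$-closure handled by the superscript-dominance property of stratifiers, and $E_3$ handled by the inductive hypothesis at $\gamma<\alpha$). Your explicit check that $T$ is r.e.\ is a small point the paper leaves implicit, but otherwise the arguments coincide.
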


\begin{proof}
By Corollary \ref{revelatorycorollary} it is enough to show
$\M_{T^\oplus}\models T^\oplus$.
We will use transfinite induction up to $\epom$
to show that for all $\alpha\in\epom$, $\M_{T^\oplus}\models T^\oplus\cap\alpha$.

Let $\sigma\in T^\oplus\cap\alpha$.  Then $\sigma\equiv\theta^+$ for some $\theta\in T$ and some stratifier $\bullet^+$.
We will show $\M_{T^\oplus}\models\theta^+$.

\item
\case1
$\theta\in T_0$.
Then $\M_{T^\oplus}\models \theta^+$ because $T\supseteq T_0$ is r.e.~and $T_0$ is r.e.-upgeneric.

\item
\case2
$\theta$ is $K\phi$ for some sentence $\phi\in T$.  Let $\alpha_0$ be such that $(K\phi)^+\equiv K^{\alpha_0}\phi^+$.
By Definition \ref{stratifierdefn}, $\onset(\phi^+)\subseteq \alpha_0$
and thus $\phi^+\in T^\oplus \cap\alpha_0$, so $T^\oplus\cap\alpha_0\models \phi^+$,
so $\M_{T^\oplus}\models K^{\alpha_0}\phi^+$.

\item
\case3
$\theta$ is $\ucl{K\phi\rightarrow\phi}$ for some $\phi$.
Let $\alpha_0$ be such that $(K\phi)^+\equiv K^{\alpha_0}\phi^+$,
so $\theta^+$ is $\ucl{K^{\alpha_0}\phi^+\rightarrow\phi^+}$.
Since $\theta^+\in T^\oplus\cap\alpha$, this forces $\alpha_0<\alpha$.
Let $s$ be any assignment and assume $\M_{T^\oplus}\models K^{\alpha_0}\phi^+[s]$.
Then:
\begin{align*}
\M_{T^\oplus} &\models K^{\alpha_0}\phi^+[s]
  &\mbox{(Assumption)}\\
T^\oplus\cap\alpha_0 &\models (\phi^+)^s
  &\mbox{(Definition \ref{stratifiedmodel})}\\
\M_{T^\oplus} &\models (\phi^+)^s
  &\mbox{(By $\epom$-induction, $\M_{T^\oplus}\models T^\oplus\cap\alpha_0$)}\\
\M_{T^\oplus} &\models \phi^+[s]\mbox{, as desired.}
  &\mbox{(Lemma \ref{scriptmbehavesasintended})} 
\end{align*}
\end{proof}

\begin{corollary}
$T^w_{\text{SMT}}$ is true.
\end{corollary}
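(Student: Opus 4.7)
The plan is to apply Theorem \ref{themaintheorem} with $T_0 := (T^w_{\text{SMT}})\backslash E_3$. By Corollary \ref{evenweakerweak}, this $T_0$ (the smallest $K$-closed $\LEA$-theory containing $E_1$, Assigned Validity, $E'_2$, the axioms of Epistemic Arithmetic, and Mechanicalness) is r.e.-upgeneric, which verifies exactly the hypothesis of the main theorem.

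Next I would set $T_1 := T_0 \cup E_3$ and let $T$ be the smallest $K$-closed $\LEA$-theory containing $T_1$. Theorem \ref{themaintheorem} then yields $\mathscr N_T \models T$, i.e.\ $T$ is true. The remaining step is to identify $T$ with $T^w_{\text{SMT}}$. By inspection, both are ``the smallest $K$-closed $\LEA$-theory containing $E_1$, $E_3$, Assigned Validity, $E'_2$, Epistemic Arithmetic, and Mechanicalness'': on the $T^w_{\text{SMT}}$ side this is exactly what lines 1--5 enumerate, with line 6 enforcing $K$-closure; on the $T$ side, $T_0$ already $K$-closes over everything but $E_3$, and adjoining $E_3$ and then re-closing under $K$ produces the same smallest $K$-closed theory. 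Hence $T = T^w_{\text{SMT}}$ is true, which is the corollary.

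No genuinely new obstacle is expected: the corollary is essentially a packaging result. All substantive content has already been delivered upstream---by Lemmas \ref{etwoprimeisupgeneric}, \ref{assignedvalidityisupgeneric}, \ref{eaisupgeneric}, \ref{smtisreupgeneric}, \ref{eoneandassignedvalidityisupgeneric}, and \ref{kclosurelemma}, which combine (through Corollary \ref{evenweakerweak}) to give r.e.-upgenericity of $T_0$, and by Theorem \ref{themaintheorem} itself, whose $\epom$-induction absorbs $E_3$. The only thing requiring mild care is bookkeeping: keeping the separation of $E_3$ from the other schemas straight, so that the identification $T = T^w_{\text{SMT}}$ is unambiguous.
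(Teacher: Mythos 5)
Your proposal is correct and is exactly the paper's argument: the paper proves this corollary in one line by citing Theorem \ref{themaintheorem} together with Corollary \ref{evenweakerweak}, and your instantiation $T_0 = (T^w_{\text{SMT}})\backslash E_3$ with the identification $T = T^w_{\text{SMT}}$ is precisely the bookkeeping that one-line proof leaves implicit.
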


\begin{proof}
By Theorem \ref{themaintheorem} and Corollary \ref{evenweakerweak}.
\end{proof}

If one is willing to induct up to $\epsilon_0\cdot\omega$ and
use machinery from \cite{carlson1999}, it is possible (without the grievous sacrifices
we have made in this paper) to generalize Reinhardt's conjecture
to a statement of the form:
\begin{quote}Any r.e.~theory that is generic in a very specific sense
(one that allows $E_2$ as building block)
remains true upon augmentation by knowledge of its own truthfulness. ($*$)\end{quote}
The specific notion of ``generic'' in order for this to work is somewhat complicated
and hinges on \cite{carlson1999}, putting it out of the present paper's scope.
It does admit Mechanicalness as building block,
so that ($*$) really is a generalization of Reinhardt's conjecture,
and the notion also admits full $E_2$, which in turn allows building blocks
containing $E_4$.

The main result of \cite{alexandercode} can also be generalized in this manner.
The methods of that paper are easily modified to prove:
\begin{quote}
For any r.e.~$\LEA$-theory $T$ that is generic (in the sense of Definition \ref{baregenericdefn}),
there is an $n\in\N$ such that $T'$ is true, where $T'$ is the smallest $K$-closed theory
containing $T$ along with the schema $\forall x(K\phi\leftrightarrow \langle 
x,\overline{\ulcorner\phi\urcorner}\rangle\in W_{\overline n})$ ($\FV(\phi)\subseteq\{x\}$).
Less formally, any such generic knowing machine can be taught its own code and still remain true.
\end{quote}

One possible application of this paper is to reverse mathematics \cite{simpson}.
Since the results (except Lemma \ref{eaisupgeneric}) only use induction
up to $\epom$,
suitable versions (minus Lemma \ref{eaisupgeneric} and
references to $\N$)
could be formalized and proved in weak subsystems of arithmetic.


\begin{thebibliography}{99}

\bibitem{alexanderdissert}
Alexander, S. (2013).
The Theory of Several Knowing Machines.  Doctoral dissertation, the Ohio State University.

\bibitem{alexandercode}
Alexander, S. (preprint).
A machine that knows its own code.  To appear in \emph{Studia Logica}.

\bibitem{alexanderjsl}
Alexander, S. (preprint). Self-referential theories.
Submitted.

\bibitem{benacerraf}
Benacerraf, P. (1967).
\newblock God, the Devil, and G\"{o}del.
\newblock {\em The Monist}, \textbf{51}, 9--32.

\bibitem{carlson1999}
Carlson, T.J. (1999).  Ordinal arithmetic and $\Sigma_1$-elementarity.
\emph{Archive for Mathematical Logic}, \textbf{38}, 449--460.

\bibitem{carlson2000}
Carlson, T.J. (2000). Knowledge, machines, and the consistency of Reinhardt's strong mechanistic thesis.
\hspace{-2pt}\emph{Annals of Pure and Applied Logic}, \textbf{105}, 51--82.

\bibitem{carlson2001}
Carlson, T.J. (2001).  Elementary patterns of resemblance.
\emph{Annals of Pure and Applied Logic}, \textbf{108}, 19--77.

\bibitem{carlson2012}
Carlson, T.J. (2012).  Sound Epistemic Theories and Collapsing Knowledge.
Slides from the \emph{Workshop on The Limits and Scope of Mathematical Knowledge}
at the University of Bristol.

\bibitem{lucas}
Lucas, J.R. (1961).
\newblock Minds, machines, and G\"{o}del.
\newblock {\em Philosophy}, \textbf{36}, 112--127.

\bibitem{penrose}
Penrose, R. (1989).
\newblock {\em The Emperor's New Mind: Concerning Computers, Minds, and
the Laws of Physics}. Oxford University Press.

\bibitem{putnam}
Putnam, H. (2006).
\newblock After G\"{o}del.
\newblock {\em Logic Journal of the IGPL}, \textbf{14}, 745--754.

\bibitem{reinhardt}
Reinhardt, W. (1985).
\newblock Absolute versions of incompleteness theorems.
\newblock {\em No\^us}, \textbf{19}, 317--346.

%
%

\bibitem{shapiro1985}
Shapiro, S. (1985).  Epistemic and Intuitionistic Arithmetic.  In: S.~Shapiro (ed.), \emph{Intensional Mathematics} (North-Holland, Amsterdam),
pp.~11--46.

\bibitem{simpson}
Simpson, S. (2009).
\emph{Subsystems of Second Order Arithmetic}.  2nd Edition, Cambridge University Press.

\end{thebibliography}
\end{document}